  \gdef\sformat{"Date: 
\endgroup
\directlua{
 local cmd="git show -s --format='"..\sformat.."'"
 local r=io.popen(cmd):read("*a")
 if (r) then
      tex.print("\string\\def\string\\COMMIT{"..r.."}")
 end
 }
\or
\relax\fi
\makeatother
\ifdefined\COMMIT
        \usepackage{background}
        \backgroundsetup{%
         pages=all, placement=bottom,angle=0,scale=2,%
         vshift=20pt,%
         contents={\COMMIT}}
\fi
\else
\usepackage[british]{babel}
\fi

\newcommand{\KK}{\mathbb{K}}
\newcommand{\RR}{\mathbb{R}}
\newcommand{\SSs}{\mathbb{S}}
\newcommand{\CC}{\mathbb{C}}
\newcommand{\cS}{\mathcal{S}}
\newcommand{\cD}{\mathcal{D}}
\newcommand{\cB}{\mathcal B}
\newcommand{\cA}{\mathcal A}
\newcommand{\cU}{\mathcal U}
\newcommand{\cQ}{\mathcal Q}

\newcommand{\wt}{\mathrm{wt}}
\newcommand{\cL}{\mathcal{L}}
\newcommand{\cH}{\mathcal{H}}
\newcommand{\GG}{\mathbb{G}}
\newcommand{\HH}{\mathbb{H}}
\newcommand{\PP}{\mathbb{P}}
\newcommand{\XX}{\mathbb{X}}
\newcommand{\fB}{\mathfrak{B}}
\newcommand{\fS}{\mathfrak{S}}
\newcommand{\fX}{\mathfrak{X}}
\newcommand{\fP}{\mathfrak{P}}
\newcommand{\fL}{\mathfrak{L}}
\newcommand{\fA}{\mathfrak{A}}
\newcommand{\fC}{\mathfrak{C}}
\newcommand{\Gr}{\mathrm{Gr}}
\newcommand{\cM}{\mathcal{M}}
\newcommand{\cC}{\mathcal{C}}
\newcommand{\cV}{\mathcal{V}}
\newcommand{\cW}{\mathcal{W}}
\newcommand{\cT}{\mathcal{T}}
\newcommand{\cX}{\mathfrak{X}}
\newcommand{\cF}{\mathcal{F}}
\newcommand{\cP}{\mathcal{P}}

\newcommand{\cN}{\mathcal{N}}
\newcommand{\fa}{\mathfrak a}
\newcommand{\sh}{\mathrm{sh}}
\newcommand{\fb}{\mathfrak b}
\newcommand{\fx}{\mathfrak x}
\newcommand{\fv}{\mathfrak v}
\newcommand{\diam}{\mathrm{diam}}
\newcommand{\PgL}{\mathrm{P}\Gamma{\mathrm{L}}}
\newcommand{\LL}{\mathbb{L}}
\newcommand{\TT}{\mathbb{T}}
\newcommand{\QQ}{\mathbb{Q}}
\newcommand{\cG}{\mathcal{G}}
\newcommand{\cGG}{\mathcal{Gr}}
\newcommand{\ccQ}{\mathcal{Q}}
\newcommand{\RM}{\mathrm{RM}\,}
\newcommand{\trace}{\mbox{\itshape trace}}
\newcommand{\diag}{\mbox{\itshape diag}}
\newcommand{\spin}{\text{\itshape spin}}
\newcommand{\cha}{\text{\itshape char}}
\newcommand{\bE}{\mathbb E}
\newcommand{\baM}{\overline{M}}
\newcommand{\PG}{\mathrm{PG}}
\newcommand{\Sp}{\mathrm{Sp}}
\newcommand{\GL}{\mathrm{GL}}
\newcommand{\PGL}{\mathrm{PGL}}
\newcommand{\PGO}{\mathrm{PGO}}
\newcommand{\PGU}{\mathrm{PGU}}
\newcommand{\PSp}{\mathrm{PSp}}
\newcommand{\FF}{\mathbb{F}}
\newcommand{\ZZ}{\mathbb{Z}}
\newcommand{\NN}{\mathbb{N}}
\newcommand{\WW}{\mathbb{W}}
\newcommand{\bS}{\mathbb{S}}
\newcommand{\Rad}{\mathrm{Rad}}
\newcommand{\Res}{\mathrm{Res}}
\newcommand{\Fix}{\mathrm{Fix}}
\newcommand{\Aut}{\mathrm{Aut}}
\newcommand{\lt}{\mathrm{lt}}
\newcommand{\gr}{\mathrm{gr}}
\newcommand{\er}{\mathrm{er}}
\newcommand{\aut}{\mathrm{Stab}}
\newcommand{\ch}{\mathrm{char}}
\newcommand{\rank}{\mathrm{rank}\,}
\newcommand{\N}{\mathcal{N}}
\newcommand{\ox}{\overline{x}}
\newcommand{\ov}{\overline{v}}
\newcommand{\oy}{\overline{y}}
\newcommand{\oU}{\widetilde{U}}
\newcommand{\oS}{\overline{S}}
\newcommand{\oM}{\overline{M}}
\newcommand{\ou}{\overline{u}}
\newcommand{\oV}{\overline{V}}
\newcommand{\oPi}{{\overline{\Pi}}_{\varphi}}
\newcommand{\oRad}{\overline{\mathrm{Rad}}(\varphi)}
\newcommand{\GF}{\mathrm{GF}}

\newcommand{\bZ}{\bf{0}}
\newcommand{\codim}{\mathrm{codim}\,}
\theoremstyle{plain}
\newtheorem{MainTheo}{Theorem}

\newtheorem{lemma}{Lemma}[section]
\newtheorem{theorem}[lemma]{Theorem}
\newtheorem{corollary}[lemma]{Corollary}

\newtheorem{prop}[lemma]{Proposition}
\newtheorem{claim}{Claim}

\theoremstyle{definition}
\newtheorem{definition}[lemma]{Definition}
\newtheorem{remark}[lemma]{Remark}

\def\pr{\noindent{\bf Proof. }}
\def\eop{\hspace*{\fill}$\Box$}
\begin{document}

\title{On the Grassmann Graph of Linear Codes}
	\author[1]{Ilaria Cardinali\corref{cor1}}
\ead{ilaria.cardinali@unisi.it}
\author[2]{Luca Giuzzi}
  \ead{luca.giuzzi@unibs.it}
\author[3]{Mariusz Kwiatkowski}
\ead{mkw@matman.uwm.edu.pl}
\address[1]{
  Department of Information Engineering and Mathematics,
University of Siena,
Via Roma 56, I-53100, Siena, Italy}
\address[2]{D.I.C.A.T.A.M. (Section of Mathematics),
University of Brescia,
Via Branze 53, I-25123, Brescia, Italy}
\address[3]{Faculty of Mathematics and Computer Science,
University of Warmia and Mazury, S{\l}oneczna 54, Olsztyn, Poland}
\cortext[cor1]{Corresponding author}

\begin{abstract}
Let $\Gamma(n,k)$ be the Grassmann graph formed by the $k$-dimensional subspaces of a vector space of dimension $n$ over a field $\FF$ and, for $t\in \mathbb{N}\setminus \{0\}$, let $\Delta_t(n,k)$ be the subgraph  of $\Gamma(n,k)$ formed by the set of linear $[n,k]$-codes having minimum dual distance at least $t+1$.
We show that if $|\FF|\geq{n\choose t}$ then $\Delta_t(n,k)$ is connected and
  it is isometrically embedded in $\Gamma(n,k)$. 
\end{abstract}
\begin{keyword}
  Grassmann graph \sep Linear Codes \sep Diameter
\MSC[2010]{51E22 \sep 94B27}
\end{keyword}

\maketitle
\par

\section{Introduction}\label{Introduction}
Let $V:=V(n,\FF)$ be a $n$-dimensional  vector space
over a field $\FF$ and for $k=1,\dots, n-1$, denote by
$\Gamma(n,k)$ the \emph{$k$-Grassmann graph} of $V$, that is the graph
whose vertices are the $k$-subspaces of $V$ and where two
vertices $X,Y$ are connected by an edge if and only if
$\dim(X\cap Y)=k-1$. See~\cite{DRG} for more detail.

It is interesting to see what properties extend from the
graph $\Gamma(n,k)$ to some of its subgraphs.

Suppose that $B=(e_1,\dots,e_n)$ is a given basis of $V$; henceforth we will write the coordinates of the vectors in $V$ with respect to $B$.
Given two vectors $x=\sum_{i=1}^n{\alpha_i}e_i$ and $y=\sum_{i=1}^n{\beta_i}e_i$,
the Hamming distance (with respect to the basis $B$) between $x$ and $y$
is $d(x,y):=|\{i: x_i\neq y_i\}|.$
In this setting, a \emph{$[n,k]$-linear code} $C$ is just
a $k$-dimensional vector subspace of $V.$
Usually it is also assumed that $V$ is defined over
a finite field $\FF_q$. However, for the
purposes of the present paper we shall use the language of coding
theory even when the field $\FF$ is not finite.
If $B_C$  is an ordered  basis of $C$,  a \emph{generator matrix}
for $C$ is the $k\times n$ matrix whose rows are the coordinates of the elements of $B_C$ with respect to $B$.
Given a $[n,k]$-linear code $C$, its \emph{dual code}
is the $[n,n-k]$-linear code $C^{\perp}$ given by
\[ C^{\perp}:=\{ v\in V: \forall c\in C, v\cdot c=0 \} \]
where by $\cdot$ we mean the standard symmetric bilinear form on $V$
given by
\[ (v_1,\dots,v_n)\cdot (c_1,\dots,c_n)=v_1c_1+\dots+v_nc_n. \]
Since the  $\cdot$ is non-degenerate, $C^{\perp\perp}=C$.
We say that $C$ has \emph{dual minimum distance} $t+1$ if
and only if the minimum Hamming distance of the dual $C^{\perp}$ of $C$ is $t+1$.

The condition for a $[n,k]$-linear code $C$ having dual minimum distance at least $t+1$ can be easily read on any  generator matrix of it.
Indeed (see, e.g., Proposition~\ref{equiv}), $C$ has dual minimum distance at least $t+1$ if and only if any $t$-columns of any generator matrix of $C$ are linearly independent.

For $t\in \mathbb{N}\setminus \{0\}$, let $\cC_t(n,k)$ be the set of all $[n,k]$-linear codes with dual minimum distance at least $t+1$ and denote by $\Delta_t(n,k)$ the subgraph of $\Gamma(n,k)$ induced by the elements of $\cC_t(n,k)$, i.e. the vertex set of $\Delta_t(n,k)$ is formed by the elements in $\cC_t(n,k)$ and two vertices $X$ and $Y$ are adjacent in $\Delta_t(n,k)$ if and only if $\dim(X\cap Y)=k-1.$ We shall call $\Delta_t(n,k)$ the \emph{Grassmann graph} of the linear $\cC_t(n,k)$ codes.

Note that for $t=1$, $\cC_1(n,k)$ is the class of the \emph{non-degenerate $[n,k]$-linear codes} and for $t=2$, $\cC_2(n,k)$ is the class of the \emph{ projective $[n,k]$-linear codes}.

In general,  we say that a subgraph is isometrically embedded in a larger graph if there exists a distance-preserving map among them (see also Definition~\ref{def:isometrically}); see also~\cite{S}.

In~\cite{KP16} Kwiatkowski and Pankov studied the graph
$\Delta_1(n,k)$ and more recently in~\cite{KPP18} Kwiatkowski, Pankov and Pasini,
considered the graph $\Delta_2(n,k)$ in the case $\FF$ is a finite field of order $q$.

In~\cite[Corollary 2]{KP16}, the authors show that $\Delta_1(n,k)$
is connected and isometrically embedded in $\Gamma(n,k)$ if
and only if $n<(q+1)^2+k-2$.
In~\cite[Theorem 1]{KPP18} it is shown that a sufficient condition
for the graph $\Delta_2(n,k)$ to be isometrically embedded in $\Gamma(n,k)$
is $q\geq{n\choose 2}$.  In~\cite{KPP18} it is also shown that the
graph of simplex codes is not always isometrically embedded in the
Grassmann graph.

In this paper we extend some results of~\cite{KP16} and~\cite{KPP18} to the graphs $\Delta_t(n,k)$ for arbitrary $t\leq k$ and arbitrary fields $\FF$.

More in detail, our main result is
the following
\begin{MainTheo}  \label{c1}
  Let $t,k,n$ be integers such that  $1\leq t\leq k\leq n<\infty$.
  Suppose that $\FF$ is a field  with $|\FF|\geq{n\choose t}$. Then
  the graph $\Delta_t(n,k)$ is connected and isometrically embedded into the $k$-Grassmann graph $\Gamma(n,k)$.
  Furthermore, the diameter of $\Delta_t(n,k)$ and $\Gamma(n,k)$
  are the same.
\end{MainTheo}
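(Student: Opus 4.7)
The plan is to establish the isometric embedding by induction on the Grassmann distance $d(X,Y):=k-\dim(X\cap Y)$ between $X,Y\in\cC_t(n,k)$. The key reduction lemma will state: for any such $X,Y$ with $d(X,Y)=d\geq 1$, there exists $X'\in\cC_t(n,k)$ adjacent to $X$ in $\Gamma(n,k)$ with $d(X',Y)=d-1$. Iterating this lemma produces a path of length $d$ from $X$ to $Y$ lying entirely in $\Delta_t(n,k)$, which simultaneously proves connectedness and that $\Delta_t(n,k)$ is isometrically embedded in $\Gamma(n,k)$.

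To build $X'$, I will set $W:=X\cap Y$, fix any $(k-1)$-dimensional subspace $Z$ of $X$ containing $W$, and search for $v\in Y$ so that $X':=Z+\langle v\rangle$ lies in $\cC_t(n,k)$. A routine dimension computation gives $\dim(X'\cap Y)=k-d+1$, so the Grassmann distance drops by one as soon as $X'$ has full dimension $k$; the only remaining requirement is the dual-distance condition on $X'$. By Proposition~\ref{equiv}, this amounts to asking that the coordinate projection $\pi_S\colon V\to\FF^S$ be surjective on $X'$ for every $t$-subset $S\subseteq\{1,\dots,n\}$. Since $X\in\cC_t(n,k)$ and $Z$ has codimension one in $X$, the image $\pi_S(Z)$ has dimension $t$ or $t-1$. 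In the first case there is nothing to verify; in the second, the missing direction must be supplied by $\pi_S(v)$, which becomes the linear constraint $v\notin H_S$, where $H_S:=\pi_S^{-1}(\pi_S(Z))$ is a hyperplane of $V$ that contains $Z$ (hence $W$) and does not contain $Y$ (since $Y\in\cC_t$ forces $\pi_S(Y)=\FF^S$).

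The problem thus reduces to finding $v\in Y$ outside the union of at most $\binom{n}{t}$ hyperplanes $H_S\cap Y$ of $Y$. Because each $H_S$ contains $W$, any admissible $v$ automatically lies outside $X$, so $X'=Z+\langle v\rangle$ has dimension $k$, is adjacent to $X$ in $\Delta_t(n,k)$, and lies at distance $d-1$ from $Y$. Existence of $v$ then follows from the classical covering fact that a vector space over $\FF$ is never the union of at most $|\FF|$ proper subspaces, combined with the hypothesis $|\FF|\geq\binom{n}{t}$. The hard part will be the bookkeeping in this step: verifying that $W$ sits inside each relevant $H_S$, so that no additional constraint needs to be counted against the field-size budget, and confirming that the covering bound is tight enough to match exactly the hypothesis of the theorem.

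The diameter claim splits into two inequalities. The isometric embedding already gives $\mathrm{diam}(\Delta_t(n,k))\leq\mathrm{diam}(\Gamma(n,k))=\min(k,n-k)$ for free. For the reverse inequality it suffices to exhibit a pair in $\cC_t(n,k)$ at the extremal Grassmann distance; since $|\FF|\geq\binom{n}{t}\geq n$, Reed--Solomon $[n,k]$ codes are available over $\FF$, and two such MDS codes (which trivially lie in $\cC_t(n,k)$ for every $t\leq k$) can be arranged to have intersection of dimension $\max(0,2k-n)$, realizing the full Grassmann diameter inside $\Delta_t(n,k)$.
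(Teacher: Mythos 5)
Your argument for connectedness and isometric embedding is correct, but it takes a genuinely different and in fact leaner route than the paper. The paper proves the one-step reduction only for $d\leq t$ (Lemma~\ref{ml7}), by contradiction, via a ``coloration'' of $Y/(H\cap Y)$ by $t$-subsets of coordinates and an inductive pigeonhole argument producing a monochromatic independent set; the case $d>t$ then requires a separate descent through $\cC_t(n,k-1)$ (Lemma~\ref{d:>}). Your key observation --- that for each $t$-set $S$ with $\dim\pi_S(Z)=t-1$ the bad locus $\{v\in Y:\pi_S(v)\in\pi_S(Z)\}=H_S\cap Y$ is a \emph{hyperplane} of $Y$, because $Y\in\cC_t(n,k)$ forces $\pi_S(Y)=\FF^S$ --- collapses all of this into a single covering bound: at most ${n\choose t}\leq|\FF|$ hyperplanes cannot cover $Y$, uniformly in $d$. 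The bookkeeping you worried about does close: each $H_S$ contains $Z\supseteq W$, so when at least one constraint is present any admissible $v$ lies outside $Z$ automatically, and when no constraint is present any $v\in Y\setminus W$ works; also $X'\cap Y=W+\langle v\rangle$ exactly, so the distance drops by precisely one. (One pedantic point: state the covering fact as ``no union of $N\leq|\FF|$ proper subspaces, $N$ finite, covers a vector space''; ``at most $|\FF|$'' is false verbatim for infinite $\FF$.)

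The diameter claim is where you have a real gap. Saying that two Reed--Solomon codes ``can be arranged'' to meet in dimension $\max(0,2k-n)$ is an assertion, not a proof, and it is exactly the content the paper has to work for: Lemma~\ref{opGamma} shows that \emph{every} $C\in\cC_t(n,k)$ admits an equivalent opposite code, by putting a generator matrix in the form $(I\;A\;B)$, comparing it with $(\lambda A\;I\;B)$, and choosing $\lambda$ so that $\lambda^{-1}$ avoids the eigenvalues of $A^2$ (possible since $A^2$ has at most $\min\{k,n-k\}<|\FF^*|$ nonzero eigenvalues). You can instead argue directly with evaluation codes --- e.g.\ $C=\{(f(a_i))_i:\deg f<k\}$ versus $D=\{(a_i^k g(a_i))_i:\deg g<k\}$, where $f-x^kg$ having $n$ roots forces the intersection to be as small as possible --- but this needs $n$ distinct \emph{nonzero} evaluation points, hence $|\FF|>n$, which is not guaranteed by $|\FF|\geq{n\choose t}$ in the edge case $t\in\{1,n-1\}$. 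Either adapt the paper's eigenvalue trick or handle that edge case separately; as written, the second half of the theorem is not established.
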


\begin{remark}
  The hypotheses in Theorem~\ref{c1} are sufficient for the
  graph $\Delta_t(n,k)$ to be connected and to be isometrically
  embedded into $\Gamma(n,k)$ but in general they are not necessary;
  see also~\cite[Corollary 2]{KPP18}.
  We leave to a future work to determine
  if the graph $\Delta_t(n,k)$ might be connected also under
  some weaker assumptions on $q$ or $t$ and see if there are cases where
  the embedding is not isometric. We leave also to a future
  work to generalize these results to vector spaces with Hamming distance
  over a possibly non-commutative division ring as well as
  to the infinite dimensional cases both for $n$ and for $k$.
\end{remark}

The paper is structured as follows.
In Section~\ref{Intro} we recall some basic definitions and preliminary
results which shall be used in order to prove Theorem~\ref{c1}.
Section~\ref{mres} contains the proof of our main results; in particular,
in Subsection~\ref{conn} we shall prove that the graph $\Delta_t(n,k)$ is
connected and isometrically embedded in $\Gamma(n,k)$ for $q\geq{n\choose t}$,
while in Subsection~\ref{maxd} we shall show that the sets
$\cC_t(n,k)$, when not empty, always contain codes which are at maximum distance in
the Grassmann graph $\Gamma(n,k)$. 

\section{Preliminaries}\label{Intro}
As mentioned in the Introduction, $\FF$ is a field and $V:=V(n, \FF)$ denotes a $n$-dimensional vector space over $\FF$. Let  $B=(e_1,\dots, e_n)$ be a given ordered basis of $V$ with respect to which all the vectors will be written in coordinates.
For $k$ and $t$ integers such that $1\leq t\leq k\leq n-1$, $\cC_t(n,k)$ is the class of $[n,k]$-linear codes having dual minimum distance at least $t+1$. More explicitly,
    \[ \cC_t(n,k):=\{ C\subseteq V : \dim C=k, d^{\perp}(C)\geq t+1\} \]
where $d^{\perp}(C):=d_{\min}(C^{\perp})$ is the minimum distance of the dual code $C^\perp$ which means that the weight $wt(v)$ of any codeword $v=(v_1,\dots, v_n)\in C^\perp$ is at least $t+1$, i.e.
\[\wt(v):=|\{i:v_i\neq 0\}|\geq t+1.\]

 Clearly, if $\cC_t(n,k)\neq\emptyset$, then necessarily $t\leq k\leq n$.

     If $t=k$ and $\FF=\FF_q$, the elements of $\cC_k(n,k)$ are exactly the \emph{maximum distance
      separable} $[n,k]$-codes (see e.g.~\cite{MS}),
    that is codes whose minimum distance $d_{\min}$
    attains the Singleton bound $d_{\min}=n-k+1$; see Corollary~\ref{t=1,2}.

   \begin{remark}
    The condition $t\leq k\leq n$ is necessary but
    in general not sufficient to ensure
    that $\cC_t(n,k)$ is not empty. Indeed, if $\FF=\FF_q$, even for
    arbitrary values of $q$, it is not straightforward to
    determine if $\cC_t(n,k)\neq\emptyset$ or characterize
    the elements of $\cC_t(n,k)$.
    For instance the celebrated MDS conjecture implies
    $\cC_k(n,k)=\emptyset$ for $n>q+2$.
    On the other hand, if $n<q+1$,  then by Lemma~\ref{mds},
    $\cC_t(n,k)\neq\emptyset$ for all $t\leq k\leq n$.
  \end{remark}
  In order to avoid trivial cases, we shall henceforth  suppose that the parameters $n,k,t$ and $q$ if $\FF:=\FF_q$, have been chosen so
  that $\cC_t(n,k)\neq\emptyset$; in Lemma~\ref{mds} it shall
  be shown that under the assumptions of Theorem~\ref{c1} this is
  always true.

 We recall from the Introduction that $\Delta_t(n,k)$ is  the subgraph of $\Gamma(n,k)$ induced
  by the elements of $\cC_t(n,k)$.
\begin{definition}\label{def connection}
  Let $X\in\Delta_t(n,k)$. We define the
  \emph{connected component} $\Delta_t^X(n,k)$ of $X$ in $\Delta_t(n,k)$  as the subgraph of $\Delta_t(n,k)$ whose vertices are all
  $Y\in\Delta_t(n,k)$ such that there is a path in
  $\Delta_t(n,k)$ joining $X$ and $Y$.
  The graph $\Delta_t(n,k)$ is \emph{connected}
  if $\Delta_t^X(n,k)=\Delta_t(n,k)$ for some (and, consequently
  for all) $X\in\Delta_t(n,k)$.
\end{definition}

For any $X,Y\in\cC_t(n,k)$ write $d(X,Y)$ for the distance between $X$ and $Y$ in the Grassmann graph $\Gamma(n,k)$ and
$d_t(X,Y)$ for the distance between $X$ and $Y$ in $\Delta_t(n,k).$
If $\Delta_t^X(n,k)\neq\Delta_t^Y(n,k)$, that is
$X$ and $Y$ are in different connected components of $\Delta_t(n,k)$ we put
$d_t(X,Y)=\infty$.
We recall that the {\it diameter} of a graph
is the maximum of the distances among two of its vertices.

Since every edge of $\Delta_t(n,k)$ is an edge
of $\Gamma(n,k)$, it is straightforward to see that $d_t(X,Y)\geq d(X,Y)$
for all $X,Y\in\Delta_t(n,k)$.

\begin{definition}
  \label{def:isometrically}
We say that $\Delta_t(n,k)$ is \emph{isometrically embedded} in
$\Gamma(n,k)$ if for any $X,Y\in\Delta_t(n,k)$ we have
$d_t(X,Y)=d(X,Y)=k-\dim(X\cap Y)$.
\end{definition}

Note that if $\Delta_t(n,k)$ is isometrically
embedded in $\Gamma(n,k)$, then $\Delta_t(n,k)$ is
also connected.


\subsection{Some basic results}

\begin{definition}\label{coord subspace}
Let $(i_1,\dots,i_t)\in\NN^t$ be a $t$-uple of integers such
that $1\leq i_1<i_2<\dots < i_t\leq n$.
We denote by $C_{i_1\dots i_t}:=\cap_{j:=1}^t (x_{i_j}=0)$ the
$(n-t)$-dimensional subspace of $V$ obtained as the intersection of the coordinate hyperplanes of $V$ of equations
$x_{i_j}=0.$ We shall call $C_{i_1\dots i_t}$ the
$(i_1,\dots,i_t)$-\emph{coordinate subspace} of $V.$
\end{definition}

The \emph{monomial group} $\cM(V)$ of $V$ consists of all linear
transformations of $V$ which map the set of subspaces
$\{ \langle e_1\rangle,\dots,\langle e_n\rangle \}$ in itself.
It is straightforward to see that
${\cM(V)}\cong \FF^*\wr S_n$ where $\wr$ denotes the wreath product  and $S_n$ is the symmetric group of order $n$; see~\cite[Chapter 8,\S5]{MS} for more details.

\begin{definition}\label{def equiv}
  Two $[n,k]$-linear codes $X$ and $Y$ are \emph{equivalent}
  if there exists a monomial transformation $\rho\in\cM(V)$ such
  that $X=\rho(Y)$.
\end{definition}

Suppose $X$ is a $[n,k]$-linear code  with generator matrix $G_X$. If $A\in\GL(k,\FF)$ then $G'_X=AG_X$ is also a generator matrix for $X$.

It follows that
two $[n,k]$-linear codes $X$ and $Y$ with generator matrices
respectively $G_X$ and $G_Y$ are \emph{equivalent} if there
exists $A\in\GL(k,\FF)$, a permutation matrix $P\in\GL(n,\FF)$
and a diagonal matrix $D\in\GL(n,\FF)$ such that
\[ G_X=AG_Y(PD). \]

Equivalence between linear codes is an equivalence relation and
the equivalence class of a code $X$ corresponds to the orbit of
$X$ under the action of $\cM(V)$ on the $k$-dimensional subspaces
of $V$.

Also, it can be readily seen that
two codes are equivalent if and only if any two of
their generator matrices belong to the same orbit under
the action of the group $\PGL(k,\FF):(\FF^*\wr S_n)$, where $\PGL(k,\FF)$ acts on the right
of the generator matrix.

With mostly harmless abuse of notation,
in the remainder of this paper we shall not distinguish between the action of
$\cM(V)$ on the codes (regarded as subspaces of $V$) and that on
the columns of their generator matrices.

Since equivalent codes have the same parameters (in particular they have
the same minimum dual distance), we have that $\cC_t(n,k)$
consists of unions of orbits under the action of ${\cM(V)}$.


For $j=1,\dots, n,$ let  $x^j\colon V \rightarrow \FF_q$ be the $j^{\mathrm{th}}$-coordinate linear functional of $V$ which acts on the vectors $e_i$, $1\leq i\leq n$, of $B$  as $x^j(e_i)=\delta_{ij}$, where
by $\delta_{ij}$ we mean the Kronecker $\delta$ function.

Observe that, for any $v\in V$ and $j$ with $1\leq j\leq n$, we have that
$x^j(v)$ is exactly the $j$-th component of $v$
with respect to the basis $B$. So, if $X$ is a $[n,k]$-linear code and $B_X=(b_1,\dots, b_k)$ is a given basis of $X$ with respect to which the generator matrix $G_X$ is written, then  for any $i,j$ with $1\leq i\leq k$ and  $1\leq j\leq n,$ then $x^j(b_i)$ is exactly the $(i,j)$- entry in the matrix  $G_X$. So, the $j$-th column of $G_X$ represents the restriction $x^j|_{B_X}$ of the functional
$x^j$ to the basis $B_X$. By linearity, we can say that the $j$-th column of $G_X$ represents the restriction $x^j|_X$ of the functional
$x^j$ to $X$.
This has the following important consequence.
\begin{lemma}
  \label{lcons}
  Let $X\subseteq V$ be a $[n,k]$-linear code and $G_X$ be
  a generator matrix of $X$.
  A set of coordinate functionals  restricted to $X$ is linearly independent
  if and only if the columns of $G_X$ representing them are linearly
  independent.
\end{lemma}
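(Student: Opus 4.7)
The plan is to unfold both sides of the equivalence against the same basis $B_X$ of $X$ and observe they are the very same linear relation.

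First I would fix the basis $B_X=(b_1,\dots,b_k)$ of $X$ with respect to which $G_X$ is written, so that, by definition, the entry in row $i$ and column $j$ of $G_X$ is $x^j(b_i)$. In particular, for any index $j\in\{1,\dots,n\}$, the $j$-th column of $G_X$ is the coordinate vector $(x^j(b_1),\dots,x^j(b_k))^T\in\FF^k$ of the restricted functional $x^j|_X$ expressed in the basis of $X^*$ dual to $B_X$. Since the map $X^*\to \FF^k$ sending a linear functional to its vector of values on $B_X$ is an isomorphism of vector spaces, linear dependence of functionals in $X^*$ matches linear dependence of their images in $\FF^k$.

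Concretely, pick indices $1\leq j_1<\cdots<j_s\leq n$ and scalars $\lambda_1,\dots,\lambda_s\in\FF$, and form the functional $\phi:=\sum_{r=1}^s\lambda_r\,x^{j_r}|_X$ on $X$. Because $B_X$ spans $X$, $\phi$ is the zero functional if and only if $\phi(b_\ell)=0$ for every $\ell=1,\dots,k$. But
\[ \phi(b_\ell)=\sum_{r=1}^s\lambda_r\, x^{j_r}(b_\ell) \]
is precisely the $\ell$-th coordinate of $\sum_{r=1}^s\lambda_r\,c_{j_r}$, where $c_{j_r}$ denotes the $j_r$-th column of $G_X$. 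Hence $\phi\equiv 0$ on $X$ is equivalent to $\sum_{r=1}^s\lambda_r\,c_{j_r}=0$ in $\FF^k$, with the same scalars $\lambda_r$.

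Both implications of the lemma follow at once: if the columns $c_{j_1},\dots,c_{j_s}$ are linearly dependent, any non-trivial relation among them yields a non-trivial vanishing combination of $x^{j_1}|_X,\dots,x^{j_s}|_X$, and conversely. No real obstacle arises; the statement is a direct consequence of the identification of $X^*$ with $\FF^k$ via evaluation on the chosen basis, and the only thing to be careful about is keeping the coefficients $\lambda_r$ literally the same on both sides of the argument.
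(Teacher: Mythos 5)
Your proof is correct and follows exactly the identification the paper itself uses (the $(i,j)$ entry of $G_X$ is $x^j(b_i)$, so the $j$-th column is the coordinate vector of $x^j|_X$ in the dual basis of $B_X$); the paper treats the lemma as an immediate consequence of this observation and gives no further argument. Your write-up simply makes explicit the isomorphism $X^*\cong\FF^k$ via evaluation on $B_X$, which is a faithful and complete rendering of the intended reasoning.
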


If $\FF:=\FF_q$, we shall also use the notation
\[ [m]_q:=\frac{q^{m}-1}{q-1} \]
for the number of $1$-dimensional subspaces
of an $m$-dimensional vector space.


The equivalence between~(\ref{cc1}) and~(\ref{cc2}) in the following
proposition is well known; however, since many results  of the present work rely on it, we present a complete proof for the convenience of the reader.
\begin{prop}\label{equiv}
Let $X$ be a $[n,k]$-linear code and denote by $G_X$ a generator matrix of $X$. The following are equivalent.
\begin{enumerate}[(1)]
\item\label{cc1} $X$ has minimum dual distance at least $t+1$.
\item\label{cc2} Any $t$ columns of $G_X$ are linearly independent.
\item\label{cc3}  For any $1\leq i_1<i_2<\dots < i_t\leq n$ we have $dim(X\cap C_{i_1\dots i_t})=k-t$ where $C_{i_1\dots i_t}:=\cap_{j:=1}^t (x_{i_j}=0)$ is the $(n-t)$-dimensional $(i_1,\dots, i_t)$-coordinate subspace of $V.$
\end{enumerate}
\end{prop}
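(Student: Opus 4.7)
The plan is to establish (\ref{cc1}) $\iff$ (\ref{cc2}) by translating the definition of the dual code into the language of column dependencies of $G_X$, and then to establish (\ref{cc2}) $\iff$ (\ref{cc3}) by invoking Lemma~\ref{lcons} together with the rank--nullity theorem.

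For (\ref{cc1}) $\iff$ (\ref{cc2}), I would start from the observation that $G_X$ is, essentially, a parity-check matrix for $C^{\perp}$: a vector $v=(v_1,\dots,v_n)\in V$ lies in $C^{\perp}$ if and only if $v\cdot c=0$ for every $c\in X$, and since $X$ is spanned by the rows of $G_X$, this is equivalent to $G_X v^{T}=0$. Reading $G_X v^{T}$ as a linear combination of the columns of $G_X$ with coefficients $v_1,\dots,v_n$, one sees that nonzero codewords of $C^{\perp}$ of weight $\leq t$ are in bijection with nontrivial linear dependencies among sets of at most $t$ columns of $G_X$. Hence $d^{\perp}(X)\geq t+1$ if and only if no $t$ columns of $G_X$ admit a nontrivial linear dependence, which is (\ref{cc2}).

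For (\ref{cc2}) $\iff$ (\ref{cc3}), I would use the fact, recalled in Lemma~\ref{lcons}, that the $j$-th column of $G_X$ represents the restriction $x^{j}|_{X}\in X^{*}$. Fixing indices $1\leq i_1<\dots<i_t\leq n$, the subspace $X\cap C_{i_1\dots i_t}$ is precisely $\bigcap_{j=1}^{t}\ker(x^{i_j}|_{X})$. By rank--nullity applied to the linear map $X\to\FF^{t}$ sending $x\mapsto (x^{i_1}(x),\dots,x^{i_t}(x))$, we obtain
\[
\dim(X\cap C_{i_1\dots i_t}) \;=\; k-\dim\langle x^{i_1}|_{X},\dots,x^{i_t}|_{X}\rangle.
\]
Thus $\dim(X\cap C_{i_1\dots i_t})=k-t$ exactly when $x^{i_1}|_{X},\dots,x^{i_t}|_{X}$ are linearly independent in $X^{*}$, which by Lemma~\ref{lcons} is equivalent to the linear independence of columns $i_1,\dots,i_t$ of $G_X$. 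Since this must hold for every choice of $t$ indices, (\ref{cc2}) $\iff$ (\ref{cc3}) follows.

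There is no real obstacle here; the only point requiring a little care is the bookkeeping in the first equivalence, namely checking that the correspondence between codewords of $C^{\perp}$ and column dependencies is weight-preserving (the weight of $v$ matches the number of columns of $G_X$ actually involved in the dependence), so that the bound $\wt(v)\geq t+1$ translates cleanly into the linear independence of every choice of $t$ columns.
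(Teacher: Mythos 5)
Your proposal is correct and follows essentially the same route as the paper: the equivalence (1)$\iff$(2) via reading $G_X$ as a parity-check matrix for $X^{\perp}$ and matching dual codewords of low weight with column dependencies, and the equivalence (2)$\iff$(3) via Lemma~\ref{lcons} and the identification of columns with restricted coordinate functionals. Your use of rank--nullity to handle both directions of (2)$\iff$(3) at once is a slightly cleaner packaging of what the paper does in two separate contradiction arguments, but it is the same underlying idea.
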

\begin{proof}
  The matrix $G_X$ is a parity check matrix for the dual code $X^{\perp}$. Write the columns of $G_X$ as $G_1,\dots,G_n$ and let
  $y=(y_1,\dots,y_n)\in X^\perp$.  Then
  \begin{equation}\label{me}
    G_Xy^t=G_1y_1+\dots+G_ny_n=\mathbf{0}.
  \end{equation}
  Assume~(\ref{cc1}). Then, for any $y\in X^{\perp}$, $y\neq\mathbf{0},$ we
  have $\mathrm{wt}(y)\geq t+1$.
  Suppose by contradiction that there is a set of $t$-columns of
  $G_X$ which are linearly dependent. To simplify the exposition, assume without much loss
  of generality that this set comprises the first $t$-columns.
  Then,
  \[ G_1y_1+G_2y_2+\dots+G_ty_t=\mathbf{0} \] with at least one entry $y_i$ different from $0$; so
  the vector $(y_1,\dots,y_t,0,\dots,0)\neq\mathbf{0}$ is in $X^{\perp}$
  with $\mathrm{wt}(y)\leq t<t+1$. This contradicts~(\ref{cc1}).

  Conversely, assume~(\ref{cc2}) and take $y\in X^{\perp}$ with
  $\mathrm{wt}(y)=d$.
  Then $G_Xy^T=0$. If $d=0$, that is $y=\mathbf{0}$, then
  there is nothing to prove.
  If $d\not= 0$, suppose, again without much loss of generality, that exactly the first $d$ entries $y_1,\dots, y_d$
  of $y$ are non-zero. Then
  \[ G_1 y_1+\dots+G_d y_d=\mathbf{0}. \]
  In particular, the first $d$ columns of $G_X$ must be
  linearly dependent; by~(\ref{cc2}) we necessarily have $d>t$ since
  any set of $t$ columns of $G_X$ is independent; this implies
  (\ref{cc1}).
  \par
  We now prove the equivalence between~(\ref{cc2}) and~(\ref{cc3}).
  Suppose that~(\ref{cc3}) holds. Then,
  $\dim(X\cap C_{i_1\dots i_t})=k-t$ which means that the restrictions $x^{i_1}|_X, \dots, x^{i_t}|_X$ of the
  $t$ coordinate functionals
  $x^{i_1},\dots,x^{i_t}$   of $V$
  to $X$ are linearly independent. Then~(\ref{cc2})
  follows from Lemma~\ref{lcons}.

  Conversely, assume~(\ref{cc2}) holds and suppose by contradiction
  that~(\ref{cc3}) is false, that is that there exists a set of indexes
  $i_1,\dots,i_{t}$ such that $\dim(X\cap C_{i_1\dots i_t})\geq k-t+1$.
  Then, for some $j\in \{1,\dots, t\},$ we have
  \[ X\cap C_{i_1\dots i_{j-1}i_{j+1}\dots i_{t}}\subseteq
    X\cap C_{i_j}. \]
  In terms of coordinate functionals this means
  \[ x^{i_j}|_X \in \langle x^{i_1}|_X,\dots,
    x^{i_{j-1}}|_X,x^{i_{j+1}}|_X,\dots, x^{i_t}|_X\rangle. \]
  So $x^{i_j}|_X$ is a linear combination of the remaining coordinate
  functionals. In particular, by Lemma~\ref{lcons}, this means that the column $G_{i_j}$ of
  any generator matrix $G_X$ of $X$ is a linear combination of the
  columns $G_{i_1},\dots, G_{i_{j-1}},G_{i_{j+1}},\dots,G_{i_t}$. This contradicts~(\ref{cc2}).
\end{proof}

The following is an immediate consequence of Proposition~\ref{lcons}.
\begin{corollary}
  \label{t=1,2}
  The set  $\cC_1(n,k)$ consists of all $[n,k]$-linear non-degenerate
  codes; $\cC_2(n,k)$ consists of all $[n,k]$-linear projective
  codes; the set $\cC_k(n,k)$, if $\FF=\FF_q$, consists of all $[n,k]$-linear
  MDS codes.
\end{corollary}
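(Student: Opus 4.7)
The plan is to derive all three assertions as direct consequences of Proposition~\ref{equiv}: for each value of $t\in\{1,2,k\}$ I would translate the condition ``any $t$ columns of $G_C$ are linearly independent'' into the classical definition of the corresponding class of codes. No genuinely new argument is required, only a careful matching of definitions.

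First, for $t=1$, I would invoke Proposition~\ref{equiv}(\ref{cc2}) to observe that $C\in\cC_1(n,k)$ if and only if every single column of a generator matrix $G_C$ is non-zero; this is precisely the defining property of a \emph{non-degenerate} $[n,k]$-code, so $\cC_1(n,k)$ coincides with the class of non-degenerate codes. For $t=2$, the same proposition yields that $C\in\cC_2(n,k)$ if and only if any two columns of $G_C$ are linearly independent, i.e.\ no column vanishes and no two columns are proportional; equivalently, the $n$ columns represent $n$ pairwise distinct points of $\PG(k-1,\FF)$, which is by definition a \emph{projective} $[n,k]$-code.

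For $t=k$ over $\FF=\FF_q$, I would proceed as follows. By Proposition~\ref{equiv}(\ref{cc2}), $C\in\cC_k(n,k)$ if and only if every $k$ columns of $G_C$ are linearly independent, i.e.\ any such $k$ columns form a basis of $\FF_q^k$. Since $G_C$ is a parity-check matrix of the dual code $C^\perp$, this is equivalent to $d_{\min}(C^\perp)\geq k+1$; the Singleton bound applied to the $[n,n-k]$-code $C^\perp$ forces $d_{\min}(C^\perp)\leq k+1$, so equality holds, meaning $C^\perp$ is MDS, and hence so is $C$, since the dual of an MDS code is MDS. The only mild obstacle lies in this last case, where one must recall the Singleton bound together with the self-duality of the MDS property to bridge the column-independence characterization of $\cC_k(n,k)$ with the usual Singleton-extremal definition of an MDS code.
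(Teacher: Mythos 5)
Your proposal is correct and follows essentially the same route as the paper: the $t=1,2$ cases are read off directly from Proposition~\ref{equiv}, and the $t=k$ case is handled by passing to $C^\perp$, invoking the Singleton bound to see that $C^\perp$ is MDS, and using the self-duality of the MDS property (together with $C^{\perp\perp}=C$) to conclude for $C$ in both directions.
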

\begin{proof}
  Only the statement about $\cC_k(n,k)$ needs to be proved as
  the descriptions of $\cC_1(n,k)$ and $\cC_2(n,k)$ follow
  directly from Proposition~\ref{equiv}.
  Suppose $C\in \cC_k(n,k)$, i.e. $C$ is  a $[n,k]$-code having dual minimum distance at least $k+1$.
  Then, by definition of $\cC_k(n,k)$,  $C^{\perp}$ is a $[n,n-k]$-code with minimum
  distance at least $k+1=n-(n-k)+1$ and, as such it is a MDS-code.
  Since the duals of MDS codes are MDS codes, $C=C^{\perp \perp}$ is also MDS.
  \par
  Conversely, suppose $C$ to be a $[n,k]$-linear
  MDS code; then $C^{\perp}$ is also MDS
  and has minimum  distance $k+1$. It follows that $C\in\cC_k(n,k)$.
\end{proof}

\section{Proof of Theorem~\ref{c1}}\label{mres}
We proceed by steps. First, we show that $\cC_t(n,k)$ is not empty for all $t$ and $k$ with
  $1\leq t\leq k\leq n$ under the hypothesis that $\FF$ is a field with $|\FF|+1\geq n.$
Then, in Section~\ref{conn} we provide
a condition for the graph $\Delta_t(n,k)$ to be connected and
isometrically embedded in $\Gamma(n,k)$ and in Section~\ref{maxd} we show that any class $\cC_t(n,k)$
contains elements which are at maximum distance in $\Gamma(n,k).$
\begin{lemma}
  \label{mds}
  If $\FF$ is a field with $|\FF|+1\geq n$ then $\cC_t(n,k)\neq\emptyset$ for all $t$ and $k$ with
  $1\leq t\leq k\leq n$.
\end{lemma}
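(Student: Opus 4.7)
The plan is to construct a code in $\cC_k(n,k)$, since $\cC_k(n,k) \subseteq \cC_t(n,k)$ whenever $t \leq k$ (a dual minimum distance of at least $k+1$ automatically implies a dual minimum distance of at least $t+1$). By Proposition~\ref{equiv}, producing an element of $\cC_k(n,k)$ amounts to exhibiting a $k \times n$ generator matrix over $\FF$ in which every set of $k$ columns is linearly independent.

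The hypothesis $|\FF|+1 \geq n$ gives us at least $n-1$ distinct elements $\alpha_1,\dots,\alpha_{n-1} \in \FF$, so I would use a (doubly extended) Reed--Solomon type construction. Concretely, I would take the $k \times n$ matrix
\[
G = \begin{pmatrix}
1 & 1 & \cdots & 1 & 0 \\
\alpha_1 & \alpha_2 & \cdots & \alpha_{n-1} & 0 \\
\vdots & \vdots & \ddots & \vdots & \vdots \\
\alpha_1^{k-2} & \alpha_2^{k-2} & \cdots & \alpha_{n-1}^{k-2} & 0 \\
\alpha_1^{k-1} & \alpha_2^{k-1} & \cdots & \alpha_{n-1}^{k-1} & 1
\end{pmatrix}
\]
and let $C$ be the row space of $G$, so that $\dim C = k$ (the first $k$ columns alone form a Vandermonde-like matrix of full rank).

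To verify that every $k$ columns of $G$ are linearly independent, I would split into two cases. If the chosen $k$ columns all lie among the first $n-1$, the corresponding minor is a Vandermonde determinant in the distinct $\alpha_i$'s, hence nonzero. If the last column is among those chosen, expanding along the last column reduces the problem to a $(k-1) \times (k-1)$ Vandermonde minor in distinct $\alpha_i$'s, which is again nonzero. Applying Proposition~\ref{equiv} then shows $C \in \cC_k(n,k) \subseteq \cC_t(n,k)$, so $\cC_t(n,k) \neq \emptyset$.

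The only delicate point is the boundary case $|\FF| = n-1$ (in particular when $\FF = \FF_q$ and $n = q+1$), where the plain Vandermonde construction falls one column short; this is exactly why the extra column $(0,\dots,0,1)^T$ appearing above must be used. Apart from that, the argument is essentially a bookkeeping exercise with Vandermonde determinants.
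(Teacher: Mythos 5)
Your proof is correct and follows essentially the same route as the paper: both reduce to exhibiting an element of $\cC_k(n,k)$ (using $\cC_k(n,k)\subseteq\cC_t(n,k)$ for $t\leq k$) and both verify the column-independence criterion of Proposition~\ref{equiv} via Vandermonde determinants. The only difference is that the paper splits into two cases --- citing the known existence of $[n,k]$-MDS codes when $\FF=\FF_q$ with $n\leq q+1$, and writing down a plain $n$-column Vandermonde matrix when $\FF$ is infinite --- whereas your single extended construction with the extra column $(0,\dots,0,1)^T$ handles both cases uniformly and explicitly, including the boundary case $|\FF|=n-1$ where only $n-1$ distinct field elements are available.
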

\begin{proof}
Note that for all $t$ we have $\cC_{t}(n,k)\subseteq\cC_{t-1}(n,k)$.
  So, in order to get the lemma we just need to show that
  $\cC_k(n,k)\neq\emptyset$ under our assumptions.
  It is well known that if $n\leq q+1$ for $\FF:=\FF_q$  a finite field or order $q$, there exist
  $[n,k]$--linear MDS codes. Since the dual of an MDS code
  is MDS, it is immediate to see that any $k$-columns of the
  generator matrix of a $[n,k]$--MDS code $C$ are independent.
  It follows that $C\in\cC_{k}(n,k)\neq\emptyset$.

  Suppose now that $\FF$ is an arbitrary infinite field. There exist at least $n$ distinct
  elements $a_1,a_2,\dots,a_n\in\FF$.
  Consider the matrix
  \[ G:=\begin{pmatrix}
      1     & 1     & \dots & 1 \\
      a_1   & a_2   & \dots & a_n  \\
      a_1^2 & a_2^2 & \dots & a_n^2 \\
      \vdots & \vdots & & \vdots \\
      a_1^{k-1} & a_2^{k-1} & \dots & a_n^{k-1}
    \end{pmatrix}. \]
  Any $k\times k$ minor $M_{i_1\dots i_k}$ of $G$, comprising the columns
  $i_1,\dots,i_k$ is a Vandermonde matrix with determinant
  \[ \det(M_{i_1\dots i_k})=\prod_{1\leq r<s\leq k} (a_{i_s}-a_{i_r})\neq 0. \]
    In particular, the code $C$ with generator matrix $G$ belongs
    to $\cC_k(n,k)$ which is consequently non-empty.
\end{proof}

\subsection{The connectedness of the graph}
\label{conn}

The following are two elementary lemmas of linear algebra.
\begin{lemma}  \label{prelim}
  Let $X\in \cC_t(n,k)$ and let $H$ be a hyperplane of $X.$
  If  $y\not\in X$ then
  \[ \dim(\langle H, y\rangle\cap C)\leq k-t+1 \]
  for every $(n-t)$-dimensional coordinate subspace $C$ of $V.$
\end{lemma}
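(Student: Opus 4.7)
The plan is to combine the characterization of $\cC_t(n,k)$ from Proposition~\ref{equiv}(3) with two elementary dimension inequalities. Set $W:=\langle H,y\rangle$; since $H\subseteq X$ and $y\notin X$, the vector $y$ is not in $H$, so $W$ is a $k$-dimensional subspace of $V$ strictly containing $H$, and $W\neq X$.

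First I would observe that $W\cap X=H$. Indeed, both $W$ and $X$ are $k$-dimensional and they both contain the hyperplane $H$ of $X$; if $W\cap X$ were strictly larger than $H$, then $W\cap X=X$, forcing $W=X$, which contradicts $y\notin X$. Consequently, for every coordinate subspace $C$ of codimension $t$,
\[
 W\cap C\cap X \;=\; H\cap C.
\]

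Next I would use Proposition~\ref{equiv}(3) applied to $X\in\cC_t(n,k)$ to get $\dim(X\cap C)=k-t$. Since $H$ is a hyperplane of $X$, intersecting with $H$ can drop the dimension by at most one, so
\[
 k-t-1 \;\leq\; \dim(H\cap C) \;\leq\; k-t.
\]
Finally, because $H$ is a hyperplane of $W$, the quotient $W/H$ is one-dimensional, and therefore $(W\cap C)/(H\cap C)$ embeds into $W/H$, giving $\dim(W\cap C)\leq \dim(H\cap C)+1$. Combining this with the bound $\dim(H\cap C)\leq k-t$ yields
\[
 \dim(\langle H,y\rangle\cap C) \;\leq\; (k-t)+1 \;=\; k-t+1,
\]
as required. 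There is no real obstacle here; the only point that needs care is justifying $W\cap X=H$ (to make sure the hyperplane $H$ is actually the full intersection of the two $k$-spaces), after which the result follows from the codimension-one inclusions $H\subseteq W$ and $H\subseteq X$ together with the hypothesis that $X\in\cC_t(n,k)$.
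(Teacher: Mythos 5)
Your proof is correct and rests on the same dimension count as the paper's: $H$ has codimension one in $\langle H,y\rangle$, and $\dim(H\cap C)\leq\dim(X\cap C)=k-t$ by Proposition~\ref{equiv}, so the intersection with $C$ can grow by at most one. The paper packages this as a contradiction argument with an explicit Gaussian elimination, whereas you argue directly via the embedding $(W\cap C)/(H\cap C)\hookrightarrow W/H$; your preliminary observation that $W\cap X=H$ is true but not actually needed.
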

\begin{proof}
 Suppose by contradiction
   \[ \dim(\langle H, y\rangle\cap C)\geq k-t+2. \]
   Since $H\subseteq X$ we also have
   $\dim(\langle X, y\rangle\cap C)\geq k-t+2$.

   Any vector
   in $\langle H, y\rangle$ can be
   written in the form $x+\alpha y$ where $x\in H$ and $\alpha\in \FF$.
   In particular, there are $k-t+2$ linearly independent vectors $v_i$ in $\langle X, y\rangle\cap C$
   of the form
   \[ v_i=x_i+\alpha_i y \]
where $x_i\in H$ and $\alpha_i\in \FF.$
   By Gaussian elimination (we remove $y$), we have at least $k-t+1$ vectors
   in $X$ which are linearly independent and contained in $C$;
   so $\dim (X\cap C)\geq k-t+1$, which is a contradiction because
   $X\in \cC_t(n,k)$ (see Proposition~\ref{equiv}).
\end{proof}

\begin{lemma}
  \label{trivial}
  Let $S$ be a vector space of dimension $s$,
  $H_1\neq H_2$ be two distinct hyperplanes
  of $S$ with fixed bases $B_1$ and $B_2$.
  Then, there exists a basis $B$ of $S$ contained in $B_1\cup B_2$.
\end{lemma}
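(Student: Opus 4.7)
The plan is to extend the basis $B_1$ of $H_1$ by a single suitable vector chosen from $B_2$, showing that such a choice is possible and produces a basis of $S$.

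First, I would note that $|B_1|=|B_2|=s-1$, so I need to produce exactly one element of $B_2$ that, when adjoined to $B_1$, yields a basis of $S$. The key observation is that since $H_1$ and $H_2$ are distinct hyperplanes of $S$, they have the same dimension $s-1$, and so $H_2 \not\subseteq H_1$ (otherwise the inclusion would be an equality by dimension count). Consequently, the basis $B_2$ of $H_2$ cannot be entirely contained in $H_1$: if it were, then $H_2=\langle B_2\rangle\subseteq H_1$, contradicting $H_1\neq H_2$. Thus there exists some $v\in B_2$ with $v\notin H_1$.

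Next I would set $B:=B_1\cup\{v\}$. Since $B_1\subseteq H_1$ and $v\notin H_1$, in particular $v\notin B_1$, so $|B|=s$. Moreover, $\langle B\rangle$ contains $H_1=\langle B_1\rangle$ and also contains $v\notin H_1$, so $\langle B\rangle$ strictly contains $H_1$; as $H_1$ is a hyperplane, this forces $\langle B\rangle=S$. A spanning set of size $\dim S$ is automatically linearly independent, so $B$ is a basis of $S$ contained in $B_1\cup B_2$, as required.

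There is no real obstacle here; the only point that requires a moment's thought is the justification that some element of $B_2$ lies outside $H_1$, which follows immediately from the equality of dimensions of $H_1$ and $H_2$ combined with their distinctness. The rest is a standard "complete a basis by one vector" argument.
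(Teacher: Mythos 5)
Your proof is correct and follows essentially the same route as the paper's: both pick an element of $B_2$ outside $H_1$ (justified by $H_1\neq H_2$) and adjoin it to $B_1$ to get a set of $s$ vectors that must be a basis. The only cosmetic difference is that you conclude via ``spanning set of size $\dim S$'' while the paper concludes via ``linearly independent set of size $\dim S$''; both are valid.
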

\begin{proof}
    Since $H_1\neq H_2$, there exists at least one element $b\in B_2\setminus H_1$.
  Consider $B=B_1\cup\{b\}$. This is a linearly independent set consisting
  of $s$ distinct elements, $B\subseteq S$ and $\dim(S)=s$. It follows
  that $B$ is a basis of $S$ with $B\subseteq B_1\cup B_2$.
\end{proof}

Recall that by $d(X,Y)$ we mean the distance in $\Gamma(n,k)$ while $d_t(X,Y)$ denotes the distance in $\Delta_t(n,k)$.

The following definitions are used in the proof of Lemma~\ref{lcmm1}.
\begin{definition}  \label{psi-def}
Put ${{\{1,\dots,n\}}\choose t}:=\{(i_1,\dots,i_t)\in \mathbb{N}^t\colon 1\leq i_1<i_2<\dots<i_t\leq n\}$ and define as
  \emph{colors} the elements of it. Endow ${{\{1,\dots,n\}}\choose t}$ with the natural lexicographic order on the
  $t$-uples.

    Take $X,Y\in\cC_t(n,k)$ with $X\not= Y$ and let $H$ be a hyperplane of $X$
    such that $X\cap Y\subseteq H$. The
    \emph{coloration induced by $H$} is the
    map
    \[ \psi_H: Y/(H\cap Y)\to {{\{1,\dots,n\}} \choose t}\cup\{
      \mathbf{\infty}\} \]
  sending any vector $[p]\in Y/(H\cap Y)$ to
  the smallest (in the lexicographic order)
  color $(i_1,\dots,i_t)$ such that
  \[ \dim(\langle H,p\rangle\cap C_{i_1\dots i_t}))= k-t+1 \]
  where $C_{i_1\dots i_t}$ is the
  $(i_1,\dots ,i_t)$-coordinate subspace as defined
  in Definition~\ref{coord subspace}. If no such color exists
  we put $\psi_H([p])=\mathbf{\infty}$.
\end{definition}

The function $\psi_H$ is well defined. Indeed, if $b\in [a]=a+(H\cap Y)$, then $b=a+h$ for some $h\in H\cap Y$ and
$\langle H,b\rangle=\langle H,a+h\rangle=\langle H,a\rangle$; so
$\psi_H([a])=\psi_H([b])$.

Henceforth we shall silently denote each element $[p]$ of $Y/(X\cap Y)$
by means of its representative element $p$.

\begin{definition}
  \label{ccca}
  Under the same assumptions as in Definition~\ref{psi-def},
  we say that a set $T\subseteq Y/(H\cap Y)$ is
  \emph{monochromatic} if $\forall r,s\in T$,
  $\psi_H(r)=\psi_H(s)\neq\infty$, i.e.
    all of its elements have the same color.
\end{definition}
\begin{definition}
  Under the same assumptions as in Definition~\ref{psi-def},
  we say that a subspace $S$ of
  $Y/(H\cap Y)$ with $\dim(S)=s$ is
  \emph{colorable} if there exists at least one
  monochromatic basis of $S.$
  If $S$ is colorable, we define the \emph{color} $\psi_H(S)$ of $S$
  as the minimum color of a basis of $S$.
  \end{definition}
  In symbols, let
    \[{\mathfrak F}(S):=\{f=(p_1, \dots, p_s)\colon  f  \mbox{ is a basis of } S \mbox{ and }\psi_H(p_1)=\dots =\psi_H(p_s)\neq\infty \}\]
    be the set of monochromatic bases of $S$.
    If $f\in {\mathfrak F}(S)$, denote by  $\psi_H(f)$  the color of any element in $f.$    Hence
  \begin{lemma}\label{equiv colorable}
  The subspace $S$ is colorable if and only if   ${\mathfrak F}(S)\neq\emptyset.$
  \end{lemma}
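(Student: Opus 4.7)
The statement is essentially a restatement of the definition of colorability in set-theoretic notation, so my plan is to verify that the two formulations coincide by unpacking the definitions of \emph{colorable} and of $\mathfrak{F}(S)$, with no nontrivial content in between.

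First I would handle the forward direction. Assume $S$ is colorable. By Definition of colorability, there exists at least one monochromatic basis $f=(p_1,\dots,p_s)$ of $S$, i.e.\ a basis satisfying $\psi_H(p_1)=\cdots=\psi_H(p_s)\neq\infty$. But the set $\mathfrak{F}(S)$ is defined precisely as the collection of such tuples, so $f\in\mathfrak{F}(S)$ and therefore $\mathfrak{F}(S)\neq\emptyset$.

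Conversely, suppose $\mathfrak{F}(S)\neq\emptyset$, and pick any $f=(p_1,\dots,p_s)\in\mathfrak{F}(S)$. By the defining condition of $\mathfrak{F}(S)$, $f$ is a basis of $S$ whose entries all share a single color distinct from $\infty$, which by Definition~\ref{ccca} means $f$ is a monochromatic basis of $S$. The existence of such an $f$ is exactly the criterion for $S$ to be colorable, and the proof is complete.

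There is no genuine obstacle here; the only care required is to note that the condition ``$\psi_H(p_i)\neq\infty$'' appearing in the definition of $\mathfrak{F}(S)$ matches the condition in Definition~\ref{ccca} of a monochromatic set, so that the two notions of ``monochromatic basis'' are literally the same. Once this is observed, the equivalence is immediate and the lemma is proved.
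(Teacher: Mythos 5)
Your proof is correct and matches the paper's treatment: the paper states this lemma without proof precisely because, as you observe, it is an immediate unpacking of the definitions of \emph{colorable} and of ${\mathfrak F}(S)$. Both directions are handled correctly and nothing further is needed.
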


If $S$ is colorable, the color of $S$ is
    \[\psi_H(S):=\min \{\psi_H(f)\colon f\in {\mathfrak F}(S)\}.\]
    In other words,    $S$ has color $c$ if there are $s$ independent vectors  in
    $S$ all with the same color $c$ and any other set
  of $s$ independent vectors  in $S$ either is not monochromatic or
  has color $c'\geq c.$

  Note that a colorable subspace $S$ with color $c$ is not, in general,
  a monochromatic set.

\begin{lemma}
  \label{ml7}
  Let $X,Y\in\cC_t(n,k)$ with $\dim(X\cap Y)=k-d\geq k-t.$
  If $\FF$ is a field with $|\FF|\geq{n\choose t}$ then there exists a code
  $Z\in\cC_t(n,k)$ such that $\dim(X\cap Z)=k-1$ and $\dim(Z\cap Y)=k-d+1$.
\end{lemma}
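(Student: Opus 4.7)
I would construct $Z$ as $Z := H + \langle y\rangle$ for a hyperplane $H$ of $X$ containing $X\cap Y$ and a vector $y \in Y\setminus X$ to be chosen. Because $H\subseteq X$ and $X\cap Y\subseteq H$, one has $H\cap Y=X\cap Y$; taking $y\in Y\setminus X$ then forces $y\notin H$, so $\dim Z=k$, $\dim(X\cap Z)=k-1$, and $Z\cap Y=(X\cap Y)+\langle y\rangle$ which has dimension $k-d+1$. These dimensional conclusions are immediate from the construction and use no field hypothesis.

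The real content is to make $Z$ belong to $\cC_t(n,k)$. By Proposition~\ref{equiv} this amounts to $\dim(Z\cap C_{i_1\dots i_t})=k-t$ for every coordinate subspace. The Grassmann dimension formula already gives $\geq k-t$, and Lemma~\ref{prelim} gives $\leq k-t+1$, so the task reduces to excluding the value $k-t+1$ for every color $c=(i_1,\dots,i_t)$.

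For a fixed color $c$, I would set $W_c:=H+C_{i_1\dots i_t}$. Using $X\in\cC_t(n,k)$ (so $\dim(X\cap C_{i_1\dots i_t})=k-t$) I would split into two cases. If $X\cap C_{i_1\dots i_t}\not\subseteq H$, then a direct dimension count yields $W_c=V$ and $\dim(Z\cap C_{i_1\dots i_t})=k-t$ for every choice of $y$. If $X\cap C_{i_1\dots i_t}\subseteq H$, then $W_c$ is a hyperplane of $V$, and $\dim(Z\cap C_{i_1\dots i_t})=k-t+1$ is equivalent to $y\in W_c$. The key observation that avoids a ``very bad'' scenario is that $Y$ also lies in $\cC_t(n,k)$: this forces $\dim(Y\cap C_{i_1\dots i_t})=k-t$, so the image of $Y$ in $V/C_{i_1\dots i_t}$ has full dimension $t$, i.e.\ $Y+C_{i_1\dots i_t}=V$. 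Since $W_c$ is a hyperplane of $V$ containing $C_{i_1\dots i_t}$, this rules out $Y\subseteq W_c$. Consequently $(Y\cap W_c)/(X\cap Y)$ is a proper subspace, in fact a hyperplane, of the $d$-dimensional quotient $Y/(X\cap Y)$.

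Hence the set of ``bad'' classes $[y]\in Y/(X\cap Y)$ is contained in the union of at most ${n\choose t}$ hyperplanes. Applying the standard linear-algebra fact that the union of $N$ proper subspaces of a vector space over $\FF$ is proper whenever $|\FF|\geq N$, together with the hypothesis $|\FF|\geq{n\choose t}$, produces a class $[y]$ outside every bad hyperplane; any lift $y\in Y\setminus(X\cap Y)$ then delivers the desired $Z=H+\langle y\rangle\in\cC_t(n,k)$. The main obstacle I had anticipated was having to choose $H$ carefully to avoid colors with $Y\subseteq H+C_{i_1\dots i_t}$, and this is exactly the obstruction that dissolves once one uses $Y\in\cC_t(n,k)$; after that, no subtle choice of $H$ is needed and a pure counting argument on hyperplanes of $Y/(X\cap Y)$ finishes the proof.
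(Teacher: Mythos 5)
Your proof is correct, and it takes a genuinely different and more direct route than the paper's. The paper argues by contradiction: it introduces the coloration $\psi_H$ on $Y/(H\cap Y)$ recording, for each class $[p]$, a coordinate subspace witnessing $\langle H,p\rangle\notin\cC_t(n,k)$, proves by induction on dimension (via a pigeonhole on projective lines, which is where $|\FF|\geq{n\choose t}$ enters) that every subspace of $Y/(H\cap Y)$ admits a monochromatic basis, and then obtains $\dim(Y\cap C_{i_1\dots i_t})\geq k-t+1$ from a dimension count on $R_1+\dots+R_d$ inside $\langle H,Y\rangle$, contradicting $Y\in\cC_t(n,k)$. You instead observe that for each color $c$ the obstruction is governed by the single subspace $W_c=H+C_{i_1\dots i_t}$: either $W_c=V$ and that color never fails, or $W_c$ is a hyperplane of $V$ and the bad vectors form $Y\cap W_c$, which is a \emph{proper} subspace (indeed a hyperplane) of $Y$ containing $X\cap Y$ precisely because $Y\in\cC_t(n,k)$ forces $Y+C_{i_1\dots i_t}=V$. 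The hypothesis $|\FF|\geq{n\choose t}$ then enters only through the elementary fact that a space of dimension $d\geq 1$ over such a field is not covered by ${n\choose t}$ hyperplanes. Both arguments use $Y\in\cC_t(n,k)$ at the decisive step, require the same numerical threshold, and establish the stronger claim that \emph{every} hyperplane $H$ of $X$ containing $X\cap Y$ extends to an admissible $Z$; your version dispenses with the coloring induction, never invokes the hypothesis $d\leq t$, and in the finite case yields for free a count of at least $[d]_q-{n\choose t}[d-1]_q$ admissible classes $[y]$ for each fixed $H$, which complements the count of Corollary~\ref{lcmm1}.
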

\begin{proof}
  We prove that for every hyperplane $H$ of $X$ containing $X\cap Y$, there exists $z\in Y\setminus (X\cap Y)$ such that $Z:=\langle H,z\rangle\in \cC_t(n,k).$ 

 By way of contradiction suppose the contrary. Hence there exists a  hyperplane $H$ of $X$ with $X\cap Y\subseteq H$ such that for every  $z\in Y\setminus (X\cap Y)$,  we have  $\langle H,z \rangle \not\in \cC_t(n,k).$

Equivalently, by Proposition~\ref{equiv}, we suppose that there exists a  hyperplane $H$ of $X$ with $X\cap Y\subseteq H$ such that for every  $z\in Y\setminus (X\cap Y)$ there exist indexes $i_1<i_2<\dots<i_t$
such that $\dim(\langle  H,z\rangle\cap C_{i_1\dots i_t})\geq k-t+1.$
By Lemma~\ref{prelim}, we have $\dim(\langle  H,z\rangle\cap C_{i_1\dots i_t})= k-t+1.$

Under these assumptions, we will prove the following claim which leads to a contradiction.
\begin{claim}\label{claim}
There exist indexes $i_1,\dots,i_t$ and linearly independent
vectors $p_1,\dots,p_d\in Y\setminus(X\cap Y)$ such that
$[p_1],\dots,[p_d]$ are linearly independent in $Y/(X\cap Y)$,
\[\dim R_i= k-t+1\,\,{\rm and}\,\,\,\dim (R_1+\dots+R_d)\geq k-t+d\]
  where we put $H_i:=\langle H,p_i\rangle$ and $R_i=H_i\cap C_{i_1\dots i_t}.$
\end{claim}

Note that for every $i$, $R_i\subseteq \langle H,Y\rangle.$
From Claim~\ref{claim}, we have
\begin{equation}
  \dim(Y+R_1+\dots+R_d)\leq\dim(H+Y)= k+d-1
\end{equation}
and
\begin{multline}
  \dim (Y\cap (R_1+\dots+R_d))=\dim(Y)+\dim(R_1+\dots+R_d)-
    \dim(R_1+\dots+R_d+Y) \geq \\
    \geq k+(k-t+d)-(k+d-1)\geq k-t+1.
  \end{multline}
  Since $R_i\subseteq C_{i_1\dots i_t}$ for any $i$, it follows
  \begin{equation}
    \dim (C_{i_1\dots i_t}\cap Y)\geq
    \dim (Y\cap (R_1+\dots+R_t)) \geq k-t+1,
  \end{equation}
  which is a contradiction because $\dim(C_{i_1\dots i_t}\cap Y)=k-t$, since $Y\in\cC_t(n,k)$ (see Proposition~\ref{equiv}).

  So, in order to get the thesis,  we need to prove Claim~\ref{claim}.

  Let $S$ be a subspace of $Y/(H\cap Y)$ with $\dim(S)=s.$ We show by induction on $s$ that $S$ is colorable. First, by our hypotheses,
  for any $p\in S$ we have $\varphi_H(p)\neq\infty$.


   Suppose $\dim(S)=2.$ The projective space $\PG(S)$ is then a projective line of $\PG(Y/H\cap Y))$ so there are $|\FF|+1$ points in
         $\PG(S).$ By hypothesis we have $|\FF|\geq {n\choose t}$
         possible colors  (see Definition~\ref{psi-def}). Hence
         there are at least $2$ linearly
         independent vectors $p_1$ and $p_2$ in $S$ such that $\psi_H(p_1)=\psi_H(p_2).$ Hence, ${\mathfrak F}(S)\neq\emptyset$.
         By Lemma~\ref{equiv colorable}, $S$ is colorable.

   Suppose now $\dim(S)>2.$ Put $s:=\dim(S).$ By induction, all subspaces $S'$ of $S$ with dimension $\dim(S')=\dim(S)-1$ are colorable,
         that is they all admit a monochromatic basis.
         For any $(s-2)$-subspace $S''$ of $S$  there are
    $|\FF|+1$ distinct  $(s-1)$-dimensional subspaces $S'$ of $S$ with
    $S''\leq S'\leq S$. Also $\PG(S/S'')$ is a projective line.
    Since $|\FF|+1>{n\choose t}$, there are at least two of such subspaces,
    say $S_1$ and $S_2$ with $S_1\neq S_2$ (hence $\langle S_1,S_2\rangle=S$) which have the same
    color $\psi_H(S_1)=\psi_H(S_2)$.

    Let $B_1$ and $B_2$ be bases of respectively $S_1$ and $S_2$
    with $\psi_H(B_1)=\psi_H(B_2)$ (=$\psi_H(S_1)$).
    By Lemma~\ref{trivial}, there is a basis $B$ of $S$ contained
    in $B_1\cup B_2$. So $S$ admits at least one monochromatic basis and
    ${\mathfrak F}(S)\neq\emptyset$. By Lemma~\ref{equiv colorable}, $S$ is colorable.
     \par
     Hence,  it is always possible   to determine a monochromatic set of $d$ independent vectors $\{p_1,p_2,\dots,p_d\}$ of $Y$ such that $[p_1],\dots,[p_d]$ are independent
    in $Y/(X\cap Y).$
So, we have (recall that $H_i=\langle H,p_i\rangle$)
    \begin{equation}\label{Ri}
      \dim(H_1\cap C_{i_1\dots i_t})=
      \dim(H_2\cap C_{i_1\dots i_t})=\dots =
      \dim(H_d\cap C_{i_1\dots i_t})= k-t+1.
    \end{equation}
    Since $H\subseteq X$ and $X\in\cC_t(n,k)$, we have $\dim(H\cap C_{i_1\dots i_t})\leq k-t$.
    Recalling  that by definition $R_i:=H_i\cap C_{i_1\dots i_t}=\langle H,p_i\rangle\cap C_{i_1\dots i_t}$, we have, by~\eqref{Ri}, that
 $\dim(R_i)=\dim(H\cap C_{i_1\dots i_t})+1=k-t+1$ and $\dim(H\cap C_{i_1\dots i_t})=k-t.$

    In particular, (note that $p_i\not\in H$), it is always possible to find for $1\leq i\leq d$  an element
    $h_i\in H$ and a non-null element $\alpha_i\in\FF_q$ such that the point $\alpha_i p_i+h_i\in R_i$ is such that
    \[ \alpha_i p_i+h_i \in C_{i_1\dots i_t}; \]
up to a scalar multiple we can assume $\alpha_i=1$ for all $i$.

    We now show that $\dim(R_1+R_2+\dots+R_d)\geq k-t+d$.
    Suppose the contrary. Then, without loss of generality, we can
    assume
    $R_d\subseteq R_1+R_2+\dots+R_{d-1}$.
    In particular
    \[  p_d+h_d\in R_1+\dots+R_{d-1}, \]
    whence
    \[ p_d = \beta_1 p_1+\dots+\beta_{d-1} p_{d-1}+h \]
    with $h\in H$ a suitable element and $\beta_i\in \FF$
    for $1\leq i\leq d-1.$
    So, given that $p_1,\dots,p_d\in Y$,
    \[ p_d-(\beta_1 p_1+\dots+\beta_{d-1}p_{d-1})=h\in H\cap Y=X\cap Y, \]
    that is
    \[ [p_d]+[p_1]+\dots+[p_{d-1}]=[0] \]
    in $Y/(X\cap Y).$
    This contradicts the first part (already proved) of Claim~\ref{claim}, since $[p_1],\dots, [p_d]$ are linearly independent vectors
    of $Y/(X\cap Y)$.
    It follows $\dim(R_1+R_2+\dots+R_d)\geq k-t+d$.
    So Claim~\ref{claim} holds. This  completes the proof of the theorem.
  \end{proof}
  Note that if $\FF:=\FF_q$ is a finite field, then Lemma~\ref{ml7} gives the following
  \begin{corollary}
  \label{lcmm1}
  Let $X,Y\in\cC_t(n,k)$ with $\dim(X\cap Y)=k-d\geq k-t.$
If $\FF:=\FF_q$ and $q\geq{n\choose t}$ then there exist $[d]_q$ distinct codes  $Z\in\cC_t(n,k)$ such that $\dim(X\cap Z)=k-1$ and $\dim(Z\cap Y)=k-d+1$.
\end{corollary}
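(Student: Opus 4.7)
The plan is to harvest one code $Z$ per hyperplane of $X$ that contains $X\cap Y$, exploiting the fact---already established inside the proof of Lemma~\ref{ml7}---that for \emph{every} such hyperplane $H$ one can find $z_H\in Y\setminus(X\cap Y)$ with $Z_H:=\langle H,z_H\rangle\in\cC_t(n,k)$. It then suffices to count these hyperplanes and to verify that the assignment $H\mapsto Z_H$ is injective and that $\dim(Z_H\cap Y)=k-d+1$.

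First I would enumerate the candidate hyperplanes. Since $\dim X=k$ and $\dim(X\cap Y)=k-d$, the hyperplanes $H\leq X$ with $X\cap Y\subseteq H$ are in bijection with the hyperplanes of the $d$-dimensional quotient $X/(X\cap Y)$, of which there are exactly $[d]_q$. For each such $H$, Lemma~\ref{ml7} (applied with $q\geq\binom{n}{t}$) supplies a vector $z_H\in Y\setminus(X\cap Y)$ and a code $Z_H:=\langle H,z_H\rangle\in\cC_t(n,k)$. Observe that $z_H\notin X$ (else $z_H\in X\cap Y$), so $z_H\notin H$ and hence $\dim Z_H=k$.

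Next I would show $X\cap Z_H=H$, which gives injectivity of $H\mapsto Z_H$ at once. Since $H\subseteq X\cap Z_H$ and $H$ is a hyperplane of $X$, either $X\cap Z_H=H$ or $X\subseteq Z_H$; the latter, together with $\dim X=\dim Z_H=k$, would force $X=Z_H$, whence $z_H\in X\cap Y$, a contradiction. Thus distinct choices of $H$ produce distinct codes $Z_H$, yielding $[d]_q$ pairwise distinct members of $\cC_t(n,k)$ with $\dim(X\cap Z_H)=k-1$.

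Finally, a Grassmann computation pins down the intersection with $Y$. From $X\cap Y\subseteq H\subseteq X$ one reads off $H\cap Y=X\cap Y$, so $\dim\langle H,Y\rangle=(k-1)+k-(k-d)=k+d-1$; because $z_H\in Y$, we have $Z_H+Y=\langle H,Y\rangle$, and Grassmann's identity then gives $\dim(Z_H\cap Y)=2k-(k+d-1)=k-d+1$, as required. The only non-routine ingredient is Lemma~\ref{ml7} itself; once it is granted, the sole remaining obstacle is the distinctness check, which is handled cleanly by the codimension-one argument above.
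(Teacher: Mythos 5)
Your proposal is correct and follows essentially the same route as the paper: one code per hyperplane $H$ of $X$ containing $X\cap Y$, using the fact (established at the start of the proof of Lemma~\ref{ml7}) that every such $H$ extends to some $\langle H,z\rangle\in\cC_t(n,k)$ with $z\in Y\setminus(X\cap Y)$, and counting the $[d]_q$ hyperplanes of $X/(X\cap Y)$. The paper's own proof is a two-line remark leaving the injectivity of $H\mapsto Z_H$ and the computation of $\dim(Z_H\cap Y)$ implicit; your codimension-one argument and Grassmann identity supply exactly those missing details correctly.
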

\begin{proof}
For any hyperplane $H$ of $X$ containing $X\cap Y$ it is possible to apply the argument in the proof of Lemma~\ref{ml7}. Thus there are at least $[d]_q$ distinct codes  $Z$ with the required property.
\end{proof}
We point out that for $d=2$, Corollary~\ref{lcmm1} states the same
as \cite[Lemma 1]{KPP18}. The following extends \cite[Lemma 2]{KPP18}
to the case $d\geq 2$ as well as to when $\FF$ is infinite.

  \begin{lemma}
  \label{d:>}
  Suppose $\FF$ is  a field with $|\FF|\geq {n\choose t}$. Then for any $X\in\cC_t(n,k)$ and for
  every $U\subset X$ with $\dim(U)<k-t$ there exists $X'\in\cC_t(n,k-1)$
  satisfying $U\subset X'\subset X$.
\end{lemma}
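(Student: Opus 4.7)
The plan is to translate the condition $X'\in\cC_t(n,k-1)$ via Proposition~\ref{equiv}(3) into a condition about avoidance of certain subspaces by a hyperplane of $X$. Precisely, set $W_{i_1\dots i_t}:=X\cap C_{i_1\dots i_t}$, which has dimension $k-t$ because $X\in\cC_t(n,k)$. Any $(k-1)$-subspace $X'$ with $U\subseteq X'\subseteq X$ satisfies $X'\cap C_{i_1\dots i_t}=X'\cap W_{i_1\dots i_t}$, and this intersection has dimension either $k-t$ (if $W_{i_1\dots i_t}\subseteq X'$) or $k-t-1$ (otherwise), since $X'$ is a hyperplane of $X$. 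Thus $X'\in\cC_t(n,k-1)$ if and only if $X'$ is a hyperplane of $X$ containing $U$ that does \emph{not} contain any of the ${n\choose t}$ subspaces $W_{i_1\dots i_t}$.

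Next I would pass to the quotient $\overline{X}:=X/U$, which has dimension $m:=k-\dim(U)\geq t+1\geq 2$. Hyperplanes of $X$ containing $U$ correspond bijectively with hyperplanes of $\overline{X}$, and containment $W_{i_1\dots i_t}\subseteq X'$ becomes $\overline{W}_{i_1\dots i_t}\subseteq X'/U$, where $\overline{W}_{i_1\dots i_t}:=(W_{i_1\dots i_t}+U)/U$. Because $\dim W_{i_1\dots i_t}=k-t>\dim U$, we have $W_{i_1\dots i_t}\not\subseteq U$, and hence $\overline{W}_{i_1\dots i_t}\neq\{0\}$ for every $t$-tuple.

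Identifying hyperplanes of $\overline{X}$ with nonzero linear functionals up to scalars via the dual space $\overline{X}^{*}$, the hyperplanes containing $\overline{W}_{i_1\dots i_t}$ form the annihilator $\mathrm{Ann}(\overline{W}_{i_1\dots i_t})$, which is a \emph{proper} subspace of $\overline{X}^{*}$ since $\overline{W}_{i_1\dots i_t}\neq\{0\}$. Finding the desired $X'$ is therefore equivalent to finding a nonzero element of $\overline{X}^{*}$ outside the union of these ${n\choose t}$ proper subspaces.

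The main step (and essentially the only non-trivial point) is the following classical fact: a vector space of dimension at least two over a field $\FF$ cannot be covered by $N$ proper subspaces whenever $|\FF|\geq N$ (for $\FF$ infinite this is automatic, and for $\FF=\FF_q$ one uses that a covering requires at least $q+1$ proper subspaces). Since $\dim\overline{X}^{*}=m\geq 2$ and $|\FF|\geq{n\choose t}$, this applies with $N={n\choose t}$ and produces the required functional. The corresponding hyperplane $X'$ of $\overline{X}$, lifted to $X$, gives the $(k-1)$-subspace of $X$ containing $U$ that belongs to $\cC_t(n,k-1)$, completing the proof.
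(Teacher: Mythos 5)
Your proof is correct. The opening reduction is the same as the paper's: via Proposition~\ref{equiv}, a hyperplane $X'$ of $X$ lies in $\cC_t(n,k-1)$ precisely when it contains none of the $(k-t)$-dimensional subspaces $X\cap C_{i_1\dots i_t}$, so one must exhibit a hyperplane through $U$ avoiding at most ${n\choose t}$ fixed subspaces. Where you diverge is in how you produce that hyperplane. The paper splits into two cases: for $\FF=\FF_q$ it counts explicitly, comparing the $[m]_q$ hyperplanes through $U$ (with $m=\dim(X/U)>t$) against the at most ${n\choose t}[t]_q$ hyperplanes containing some $X\cap C_{i_1\dots i_t}$, and checks $[m]_q\geq [t+1]_q>{n\choose t}[t]_q$ when $q\geq{n\choose t}$; for infinite $\FF$ it argues in $\PG(X^*)$ that a subspace of vector dimension at least $t+1$ cannot be a finite union of subspaces of vector dimension $t$. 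You instead pass to $\overline{X}=X/U$, dualize, and invoke the single classical fact that a vector space of dimension at least two over $\FF$ is not the union of $N$ proper subspaces when $|\FF|\geq N$ (for $\FF_q$ a covering needs at least $q+1$ proper subspaces). This unifies the two cases into one argument and replaces the explicit geometric-series inequality with a cleaner covering bound; the paper's finite-field count, on the other hand, is self-contained and yields an explicit lower bound on the number of admissible hyperplanes, which your existence-only argument does not. Both routes use the hypothesis $|\FF|\geq{n\choose t}$ in the same essential way, and all the supporting claims you make (that $X\cap C_{i_1\dots i_t}\not\subseteq U$ since $k-t>\dim U$, hence each annihilator is a proper subspace of $\overline{X}^{*}$, and that $\dim\overline{X}^{*}\geq 2$) check out.
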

\begin{proof}
A hyperplane $H$ of $X$ is an element of
  $\cC_t(n,k-1)$ if and only if $H$ does not contain
  $X\cap C_{i_1\dots i_t}$ for any $i_1<\dots<i_t$.
  Indeed, if $C_{i_1\dots i_t}\cap X\subseteq H$ for some $i_1<\dots<i_t$, then
 $\dim(H\cap C_{i_1\dots i_t})\geq\dim(X\cap C_{i_1\dots i_t})= k-t>k-t-1$ and $H\not\in\cC_t(n,k-1)$.
 Conversely, suppose that for any $i_1<\dots<i_t$, $C_{i_1\dots i_t}\cap X\not\subseteq H$;
 then $\dim(H\cap X\cap C_{i_1\dots i_t})=\dim(H\cap C_{i_1\dots i_t})=k-1-t$ for any
 choice of the indexes; so $H\in\cC_t(n,k-1)$.

By Definition of $C_{i_1\dots i_t}$ (see Definition~\ref{coord subspace}), there exist at most ${n\choose t}$ distinct spaces $C_{i_1\dots i_t}$.
 Now we distinguish two cases.
 \begin{itemize}
 \item
   If $\FF:=\FF_q$ is a finite field,
   each of the spaces $C_{i_1\dots i_t}$ is
   contained in  $[t]_q$ distinct hyperplanes of $X$. So
   the number of hyperplanes containing at least one $X\cap C_{i_1\dots i_t}$ is
   at most
   \[ {n\choose t}[t]_q. \]
   On the other hand $U$ is contained in $[m]_q$ distinct hyperplanes where
   $m=\dim(X/U)$. Since $m>t$ we have
   \[ [m]_q\geq[t+1]_q=q^{t}+q^{t-1}+\dots+q+1\geq
     {n\choose t}q^{t-1}+{n\choose t}q^{t-2}+\dots+{n\choose t}+1>{n\choose t}[t]_q. \]
   This shows that there is at least one hyperplane $X'$ of $X$ containing $U$
   and none of the $C_{i_1\dots i_t}$.
   \item
Suppose $\FF$ is an infinite field and denote by $X^*$ the dual space of $X$.
     Then, for every $U\subseteq X$
     with $\dim U<k-t$ the set of hyperplanes $X'$ of $X$ containing $U$
     determine a subspace $\mathcal U$ of $\PG(X^*)$
     of vector dimension at least $k-(k-t-1)=t+1$.
     Since each of the spaces $X\cap C_{i_1\dots i_t}$ has dimension
     $k-t$ (because $X\in \cC_t(n,k)$), the set
     ${\mathcal H}_{i_1\dots i_t}$ of hyperplanes containing
     $C_{i_1\dots i_t}$ is a subspace of $\PG(X^*)$ of vector
     dimension $t$.
     In particular, the set of all hyperplanes of $X$ containing
     at least one $C_{i_1\dots i_t}$ is
     the union of ${n\choose t}$ subspaces of $\PG(X^*)$ each
     of vector dimension $t$.

     Since the field $\FF$ is infinite, it is impossible
     for a projective space of vector dimension  at least $t+1$ to be
     the union of a finite number of projective spaces of dimension
     $t$.

     It follows that there is at least one element
     \[ X'\in{\mathcal U}\setminus\bigcup_{\begin{subarray}{c}
           (i_1\dots i_t)\in \\
         {{\{i_1,\dots,i_n\}}\choose t}\end{subarray}}{\mathcal H}_{i_1\dots i_t}. \]
       This leads to the same conclusion as in the case
       in which $\FF$ is finite.
  \end{itemize}
\end{proof}

We are now ready to prove the following theorem, which extends~\cite[Theorem 1]{KPP18} to arbitrary $t.$
\begin{theorem}
\label{m3}
Suppose $\FF$ is  a field with $|\FF|\geq {n\choose t}$. Then $\Delta_t(n,k)$ is connected and  isometrically embedded
into $\Gamma(n,k)$.
\end{theorem}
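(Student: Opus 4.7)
The plan is to prove by induction on $d := d(X,Y) = k - \dim(X\cap Y)$ the inequality $d_t(X,Y) \leq d(X,Y)$ for any $X, Y \in \cC_t(n,k)$. Since the reverse inequality is automatic (every edge of $\Delta_t(n,k)$ is an edge of $\Gamma(n,k)$) and $\Gamma(n,k)$ is connected with finite diameter, this simultaneously yields the isometric embedding and connectedness. The base cases $d=0$ and $d=1$ are trivial (in the latter, $X$ and $Y$ are already adjacent in $\Gamma(n,k)$, hence in the induced subgraph $\Delta_t(n,k)$).

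For the inductive step with $d \geq 2$, my aim is to produce $Z \in \cC_t(n,k)$ with $\dim(X\cap Z)=k-1$ and $\dim(Z\cap Y) \geq k-d+1$; then $d_t(X,Y) \leq 1 + d_t(Z,Y) \leq 1 + (d-1) = d$ by induction. I would split into two cases depending on whether $d \leq t$ or $d > t$. If $d\leq t$, then $\dim(X\cap Y) = k-d \geq k-t$ and Lemma~\ref{ml7} applies directly, producing the required $Z$. If $d > t$, then $\dim(X\cap Y) = k-d < k-t$, so Lemma~\ref{d:>} applied with $U = X\cap Y$ gives a hyperplane $X' \in \cC_t(n,k-1)$ of $X$ satisfying $X\cap Y \subset X' \subset X$. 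I then pick any $z \in Y\setminus X'$ (which exists since $\dim Y = k > k-1 = \dim X'$) and set $Z := \langle X', z\rangle$.

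It remains to verify the three properties of $Z$ in the second case. The crucial observation is that if $X' \in \cC_t(n,k-1)$, then for every coordinate subspace $C = C_{i_1\dots i_t}$ one has $\dim(X'\cap C) = k-1-t$, so by Grassmann's formula $\dim(X'+C) = n$, i.e.\ $X'+C = V$; consequently $Z + C = V$ and $\dim(Z\cap C) = k-t$ for every coordinate subspace, forcing $Z \in \cC_t(n,k)$ by Proposition~\ref{equiv}. The inclusion $X' \subseteq X\cap Z$ gives $\dim(X\cap Z) \geq k-1$, and $Z \neq X$ because $z \notin X$ (as $z \in Y\setminus X'$ but $X\cap Y \subset X'$), so $\dim(X\cap Z) = k-1$; finally $(X\cap Y)+\langle z\rangle \subseteq Z\cap Y$ gives $\dim(Z\cap Y) \geq k-d+1$. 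The main obstacle is precisely the regime $d > t$: Lemma~\ref{ml7} is unavailable there since its hypothesis $\dim(X\cap Y) \geq k-t$ fails, and the role of Lemma~\ref{d:>} is to first retract $X$ to a hyperplane which itself lies in $\cC_t(n,k-1)$, after which the property $X'+C = V$ makes every $k$-dimensional extension automatically a member of $\cC_t(n,k)$.
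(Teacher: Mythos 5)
Your proposal is correct and follows essentially the same route as the paper: the same case split ($d\le t$ handled by Lemma~\ref{ml7}, $d>t$ by first passing to a hyperplane $X'\in\cC_t(n,k-1)$ via Lemma~\ref{d:>} and then adjoining a vector of $Y$), with your observation that $X'+C_{i_1\dots i_t}=V$ forces $\dim(Z\cap C_{i_1\dots i_t})=k-t$ being just a rephrasing of the paper's bound $\dim(T\cap C_{i_1\dots i_t})\le\dim(X'\cap C_{i_1\dots i_t})+1=k-t$. The explicit induction on $d$ merely formalizes the paper's ``recursively applying this argument''.
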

\begin{proof}
  Take $X,Y\in\cC_t(n,k)$ with $X\not= Y$. Put $\dim(X\cap Y):=k-d$. If $k-d\geq k-t$, then the thesis follows from Lemma~\ref{ml7}.
  Suppose now $k-d<k-t$. By Lemma~\ref{d:>} there exists $X'\subseteq X$
  with $X\cap Y\subseteq X'$ and $X'\in\cC_t(n,k-1)$.
Let $Y'$ a $(k-d)+1$- dimensional subspace of $Y$ containing
  $X\cap Y$. Put $T=\langle X',Y'\rangle$. Then $\dim(T)=k$; also
  \[ \dim(T\cap C_{i_1\dots i_t})\leq\dim(X'\cap C_{i_1\dots i_t})+1=
    k-1-t+1=k-t \]
  for all $1\leq i_1<i_2<\dots<i_t\leq n$.
  In particular $T\in\cC_t(n,k)$ and $\dim(X\cap T)=k-1$, $\dim(Y\cap T)=k-d+1$.
  By recursively applying this argument we get that $\Delta_t(n,k)$
  is connected.

  By construction, the length $d_t(X,Y)$ of the path joining $X$ and $Y$
  in $\Delta_t(n,k)$ is at most $k-\dim(X\cap Y):=d(X,Y)$, i.e. $d_t(X,Y)\leq d(X,Y)$.
  Since $d_t(X,Y)\geq d(X,Y)$ in general, we get the thesis.
\end{proof}
\begin{remark}
  As a consequence of Theorem~\ref{m3},
  \[ \mathrm{diam}(\Delta_t(n,k))\leq\mathrm{diam}(\Gamma(n,k)) \]
  when $|\FF|\geq{n\choose t}$. In the following section we
  shall show that these two diameters are actually the same.
\end{remark}

 \subsection{Codes at maximum distance}
\label{maxd}
In this section we do not assume any hypothesis on the parameters,
apart that they have been chosen so that  there exists at least one $[n,k]$-linear code with dual minimum distance at least $t,$ i.e. $\cC_t(n,k)\neq\emptyset$.
Hence, the graph $\Delta_t(n,k)$ is not assumed to be connected. This observation justifies the following.

\begin{definition}
  We say that two codes in $X,Y\in\cC_t(n,k)$ are \emph{opposite} in $\Delta_t(n,k)$
  if they belong to the same connected component
  $\Delta_t^X(n,k)=\Delta_t^Y(n,k)$
  of $\Delta_t(n,k)$ and $d_t(X,Y)=\mathrm{diam}(\Delta_t^X(n,k))$.
\end{definition}
\begin{definition}
  We say that two $k$-dimensional subspaces $X,Y\subseteq V$
	are \emph{opposite}
  in $\Gamma(n,k)$ if  $\dim(X\cap Y)=\max\{2k-n,0\}$.
\end{definition}
 Observe that if $2k\leq n$, being opposite in $\Gamma(n,k)$ means
 $\dim(X+Y)=2k$ (equivalently,
 $X\cap Y=\{0\}$), while if $2k>n$,  it means $\dim(X+Y)=n.$

  \begin{lemma}
   \label{opGamma}
  Suppose $t\leq k\leq n$, $\cC_t(n,k)\neq\emptyset$ and $|\FF|>\max\{k,n-k\}+1$. Then, for
  any code $C\in\cC_t(n,k)$ there exists a code $D\in\cC_t(n,k)$
  which is equivalent and opposite to $C$ in $\Gamma(n,k)$.
\end{lemma}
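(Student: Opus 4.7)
\par I would construct $D$ as $D=\rho(C)$ for a suitably chosen monomial transformation $\rho\in\cM(V)$; equivalence then holds by definition, and $D\in\cC_t(n,k)$ is automatic since $\cC_t(n,k)$ is a union of $\cM(V)$-orbits. It remains to arrange $\dim(C\cap\rho(C))=\max\{2k-n,0\}$. Writing $\rho=D_\lambda P_\sigma$ with diagonal $D_\lambda=\mathrm{diag}(\lambda_1,\dots,\lambda_n)$ and permutation $\sigma\in S_n$, fix a generator matrix $G$ of $C$ and note that $\rho(C)$ has a generator matrix $G'$ whose $j$-th column equals $\lambda_j$ times the $\sigma^{-1}(j)$-th column of $G$. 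The $2k\times n$ matrix $M$ obtained by stacking $G$ over $G'$ satisfies $\dim(C\cap\rho(C))=2k-\rank M$, so the goal reduces to showing that some $r\times r$ minor of $M$ is non-zero, where $r:=\min\{2k,n\}$.

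\par Laplace expansion writes every such minor as a multilinear polynomial in the $\lambda_j$'s whose coefficients are (signed) products of a $k\times k$ minor and an $(r-k)\times(r-k)$ minor of $G$. The core of the proof lies in exploiting the permutation $\sigma$ to single out two non-singular submatrices of $G$ of the appropriate sizes. Because $\rank G=k$ there exists a $k$-subset $J\subseteq\{1,\dots,n\}$ of column indices with $\det G_J\neq 0$, where $G_J$ denotes the submatrix of $G$ on columns $J$. If $2k\leq n$, select $\sigma$ so that $\sigma(J)\cap J=\emptyset$ (possible because $n\geq 2k$) and let $I:=J\cup\sigma(J)$; a direct computation with Laplace expansion shows that the coefficient of $\prod_{j\in\sigma(J)}\lambda_j$ in the $2k\times 2k$ minor of $M$ on columns $I$ is $\pm(\det G_J)^2\neq 0$. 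If instead $2k>n$, pick any $(n-k)$-subset $R$ of row indices of $G$; those rows span an $(n-k)$-dimensional subspace, so there exist $n-k$ columns $K'\subseteq\{1,\dots,n\}$ with $\det G[R,K']\neq 0$. Choosing $\sigma$ with $\sigma(K')=\{1,\dots,n\}\setminus J$, the $n\times n$ minor of $M$ formed by the top $k$ rows and bottom rows indexed by $R$ has, as coefficient of $\prod_{j\notin J}\lambda_j$, the nonzero product $\pm\det G_J\cdot\det G[R,K']$.

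\par In both cases we obtain a non-zero multilinear polynomial in $\lambda$ of total degree at most $\max\{k,n-k\}$. A Schwartz--Zippel argument on $\FF^*$ then produces $\lambda\in(\FF^*)^n$ at which the minor is nonzero, as soon as $|\FF^*|>\max\{k,n-k\}$, i.e.\ $|\FF|>\max\{k,n-k\}+1$, which matches the hypothesis. The principal obstacle is the coordination between the non-singular submatrix of $G$, the choice of auxiliary row set $R$ (in the case $2k>n$), and the permutation $\sigma$; the permutational freedom within $\cM(V)$ is what makes the argument work uniformly, even for codes whose generator matrices admit few non-singular $k$-subsets of columns, where a purely diagonal $\rho$ would not suffice.
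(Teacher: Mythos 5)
Your proof is correct, and while it shares the paper's overall strategy --- take $D=\rho(C)$ for a monomial transformation $\rho\in\cM(V)$, so that equivalence and membership in $\cC_t(n,k)$ come for free, and then tune scalars so that the matrix obtained by stacking a generator matrix of $C$ over one of $\rho(C)$ has rank $\min\{2k,n\}$ --- the execution is genuinely different. The paper first normalises $G$ to systematic form $(I\mid A\mid B)$, applies one fixed column permutation (a block swap) together with a \emph{single} scalar $\lambda$, and reduces oppositeness to $\det(I-\lambda A^2)\neq 0$, which it settles by bounding the number of eigenvalues of $A^2$ by $k$ (resp.\ $n-k$ when $2k>n$). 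You instead keep $G$ arbitrary, use all $n$ diagonal entries of $\rho$ as independent parameters, exhibit a non-vanishing coefficient of the relevant $r\times r$ minor by Laplace expansion along the top $k$ rows --- and the coefficient $\pm(\det G_J)^2$, resp.\ $\pm\det G_J\cdot\det G[R,K']$, is correctly isolated, since distinct complementary column sets contribute distinct multilinear monomials in the $\lambda_j$, so no cancellation can occur --- and close with Schwartz--Zippel over $\FF^*$, whose degree bound $\max\{k,n-k\}$ reproduces exactly the stated field-size hypothesis. Your route buys two things: it avoids the reduction to systematic form and the paper's (slightly imprecise) rank claims about the block $A$; and, since the decisive monomial is multilinear and of maximal total degree, replacing Schwartz--Zippel by the Combinatorial Nullstellensatz would even yield the conclusion for $|\FF|\geq 3$, i.e.\ your method proves a stronger statement. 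What the paper's one-parameter construction buys in exchange is explicitness: a concrete family $C''_\lambda$ and a single eigenvalue condition to check.
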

\begin{proof}
  Suppose $2k\leq n$ and let $C\in \cC_t(n,k)$ with $G$ as generator matrix. By elementary row operations on $G$, which leave $C$ invariant,
  and column operations by means of
  $\rho\in\cM(V)$ (see Definition~\ref{def equiv}), we can obtain a generator matrix
	\[ G':=\begin{pmatrix}
      I & A & B
    \end{pmatrix} \]
  for an equivalent code $\rho(C)=:C'\in \cC_t(n,k)$.
  Here $I$ is the $k\times k$ identity matrix,
  $A$ is a $k\times k$ matrix of rank $t$ (since any $t$ columns of a generator matrix of a code in $\cC_t(n,k)$ are linearly independent)
  and $B$ is a $k\times (n-2k).$ Take $\lambda\in \FF\setminus\{0\}$ and consider the matrix
     \[ G_{\lambda}'':=\begin{pmatrix}
    \lambda A & I & B
  \end{pmatrix}. \]
	
Since $G_{\lambda}''$ is obtained from $G'$ by applying transformations
induced by the monomial group $\cM(V)$,
the code $C_{\lambda}''$ having $G_{\lambda}''$ as generator matrix, is equivalent to
$C'$. In particular $C_{\lambda}''\in \cC_t(n,k).$

We want to show that it is always possible to choose $\lambda$
so that $C_{\lambda}''$ and $C'$ are in direct sum as subspaces of $V$,
that is the matrix
\[ \begin{pmatrix}
    I & A & B \\
    \lambda A & I & B
  \end{pmatrix} \]
has rank $2k$.
By elementary row operations,
subtracting from the second block $\lambda A\begin{pmatrix} I&A&B\end{pmatrix}$,
we see that the rank of the matrix above
is the same as the rank of
\[ \begin{pmatrix}
    I & A & B \\
    0 & I-\lambda A^2 & (I-\lambda A)B
  \end{pmatrix}. \]
In particular, this rank is definitely $2k$ if
$\det(I-\lambda A^2)\neq 0$.

On the other hand $\det(I-\lambda A^2)=0$ if and only if
$\lambda^{-1}$ is an eigenvalue of $A^2$. Since $A^2$ is a $k\times k$ matrix,
 of rank at most $t$,
the number of its non-null eigenvalues
is at most $t\leq k<|\FF^*|$. So, there is at least one
$\lambda\in\FF^*$ such that $\lambda^{-1}$ is not a non-null eigenvalue of $A^2.$ For such a $\lambda$, the matrix $G_{\lambda}''$ represents a code
$C_{\lambda}''\in\cC_t(n,k)$ such that $\dim(C'\cap C_{\lambda}'')=0$, that is
$C'$ and $C_{\lambda}''$ are opposite in $\Gamma(n,k)$.
\par

Suppose now that $2k>n$. Let $C\in\cC_t(n,k)$ be a code with
generator matrix $G$. Using elementary row
operations on $G$ and a monomial transformation $\rho\in\cM(V)$, we
can obtain a code $C':=\rho(C)$ equivalent to $C$ whose generator matrix
$G'$ is in systematic form, i.e.
\[ G'=\begin{pmatrix} I_{n-k} & 0 & A_1 \\
    0    & I_{2k-n} & A_2
  \end{pmatrix}, \]
where $A_1$ and $A_2$ are suitable matrices of dimensions
respectively $(n-k)\times (n-k)$ and $(2k-n)\times (n-k)$.
For $\lambda\in \FF\setminus \{0\}$, let now $G''_{\lambda}=\begin{pmatrix} \lambda A_1 & 0 & I_{n-k} \\
  \lambda A_2 & I_{2k-n} & 0
\end{pmatrix}$ and $C''_{\lambda}$ be the
code with generator matrix $G''_{\lambda}$.
The matrix $G''_{\lambda}$ is obtained by permuting and multiplying some of the
columns of the matrix $G$ by a non-zero scalar $\lambda$;
as such the code $C''_{\lambda}$ generated by $G''_{\lambda}$ is equivalent to $C$ and $C'$; thus,
$C''_{\lambda}\in\cC_t(n,k)$ for all $\lambda\neq0$.

Since $2k>n$, the codes $C'$ and $C_{\lambda}''$ are opposite if and only if
$\dim(C'+C''_{\lambda})=n$, that is to say the rank of the matrix
$\bar{G}_{\lambda}=\begin{pmatrix} G' \\ G''_{\lambda} \end{pmatrix}$ is maximum and equal to $n$.

Explicitly, the structure of the matrix $\bar{G}_{\lambda}$ is
  \[ \bar{G}_{\lambda}=
    \begin{pmatrix}
      I_{n-k} & 0 & A_1 \\
      0     & I_{2k-n} & A_2 \\
    \lambda  A_1   &    0    & I_{n-k}  \\
    \lambda  A_2   & I_{2k-n} & 0
    \end{pmatrix}.
  \]
  By using column operations we see that
\begin{multline*} \rank
   \begin{pmatrix}
      I_{n-k} & 0 & A_1 \\
      0     & I_{2k-n} & A_2 \\
    \lambda  A_1   &    0    & I_{n-k}  \\
    \lambda  A_2   & I_{2k-n} & 0
    \end{pmatrix}\geq
   \rank
   \begin{pmatrix}
      I_{n-k} & 0 & A_1 \\
      0     & I_{2k-n} & A_2 \\
    \lambda  A_1   &    0    & I_{n-k}  \\
    \end{pmatrix}= \\ \rank
    \begin{pmatrix}
      I_{n-k} & 0 & A_1 \\
      0     & I_{2k-n} & 0 \\
      \lambda A_1   &    0    & I_{n-k}  \\
    \end{pmatrix}=
    \rank
    \begin{pmatrix}
      I_{n-k} & 0 & 0 \\
      0     & I_{2k-n} & 0 \\
      \lambda A_1   &    0    & I_{n-k}-\lambda A_1^2  \\
    \end{pmatrix}.
      \end{multline*}
      So, if $\lambda^{-1}$ is not an eigenvalue of $A_1^2$,
      we have that $\rank(\bar{G}_{\lambda})=n$.
      As the matrix $A_1^2$ has dimension $(n-k)\times(n-k)$,
      we have that $A_1^2$ has at most $n-k$  eigenvalues;
      as $|\FF^*|>n-k$ there are some values of $\lambda\neq 0$ such that
      this rank is maximum; for these values of $\lambda$ we get
      that $C',C''_{\lambda}\in\cC_t(n,k)$ are opposite.
      \par
      Observe now that for any two codes $X,Y\in\cC_t(n,k)$ and any $\eta \in \cM(V)$, we have
      $d(X,Y)=d(\eta(X),\eta(Y))$ in $\Gamma(n,k)$.
      Since  $C'=\rho(C)$ for $\rho\in \cM(V)$, put $D=\rho^{-1}(C_{\lambda}''),$ where $C_{\lambda}''$ is the code constructed above. Then,
      \[ d(C,D)=d(\rho^{-1}(C'),\rho^{-1}(C_{\lambda}''))=
        d(C',C_{\lambda}'').
      \]
      It follows that $C$ and $D$ are codes
      in $\cC_t(n,k)$ which are equivalent and opposite in $\Gamma(n,k)$.
    \end{proof}
    \begin{corollary}
      \label{diam}
      Suppose $\cC_t(n,k)\neq\emptyset$ and
      $\Delta_t(n,k)$ to be isometrically embedded into
      $\Gamma(n,k)$.
      If $|\FF|>\max\{k,n-k\}+1$, then
      \[ \mathrm{diam}(\Delta_t(n,k))=
      \mathrm{diam}(\Gamma(n,k)). \]
  \end{corollary}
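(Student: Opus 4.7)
The plan is to prove both inequalities $\mathrm{diam}(\Delta_t(n,k))\leq\mathrm{diam}(\Gamma(n,k))$ and $\mathrm{diam}(\Delta_t(n,k))\geq\mathrm{diam}(\Gamma(n,k))$ separately, using the isometric embedding hypothesis in both directions and Lemma~\ref{opGamma} only for the second.

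For the upper bound, I would observe that for any $X,Y\in\cC_t(n,k)$ the isometric embedding hypothesis gives $d_t(X,Y)=d(X,Y)\leq \mathrm{diam}(\Gamma(n,k))$, and taking the supremum over all such pairs yields $\mathrm{diam}(\Delta_t(n,k))\leq \mathrm{diam}(\Gamma(n,k))$. This step is essentially free from the definitions.

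The lower bound is where Lemma~\ref{opGamma} is needed. I would pick any $C\in\cC_t(n,k)$ (which exists by the non-emptiness assumption) and apply Lemma~\ref{opGamma}, whose hypothesis $|\FF|>\max\{k,n-k\}+1$ is exactly what the corollary assumes, to obtain an equivalent code $D\in\cC_t(n,k)$ opposite to $C$ in $\Gamma(n,k)$. By the standard fact that the diameter of $\Gamma(n,k)$ is $\min\{k,n-k\}$ and that being opposite in $\Gamma(n,k)$ realises this maximum distance, we have $d(C,D)=\mathrm{diam}(\Gamma(n,k))$. Invoking the isometric embedding once more gives $d_t(C,D)=d(C,D)=\mathrm{diam}(\Gamma(n,k))$, hence $\mathrm{diam}(\Delta_t(n,k))\geq\mathrm{diam}(\Gamma(n,k))$.

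Combining the two inequalities yields the desired equality. There is no real obstacle: all the heavy lifting has already been done in Lemma~\ref{opGamma} (existence of an opposite equivalent code) and in Theorem~\ref{m3} (isometric embedding); the corollary is essentially a one-line consequence once one notes that the existence of a single pair of codes in $\cC_t(n,k)$ realising the Grassmann diameter forces the two diameters to coincide under an isometric embedding.
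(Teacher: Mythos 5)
Your proposal is correct and follows essentially the same route as the paper: both use Lemma~\ref{opGamma} to produce a pair of codes in $\cC_t(n,k)$ that are opposite (hence at distance $\mathrm{diam}(\Gamma(n,k))$) and then invoke the isometric embedding for both inequalities. The only difference is that you spell out explicitly that opposite subspaces realise the Grassmann diameter $\min\{k,n-k\}$, which the paper leaves implicit.
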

  \begin{proof}
    By Lemma~\ref{opGamma}, there are at least two codes
    $X,Y\in\cC_t(n,k)$ with $d(X,Y)=\mathrm{diam}(\Gamma(n,k))$. Since $\Delta_t(n,k)$ is isometrically
    embedded in $\Gamma(n,k)$ we also have
    $d_t(X,Y)=\mathrm{diam}(\Gamma(n,k))$. On the other hand,
    for any $X,Y\in\cC_t(n,k)$,
    \[ d_t(X,Y)=d(X,Y)\leq\mathrm{diam}(\Gamma(n,k)). \]
    It follows that $\mathrm{diam}(\Delta_t(n,k))=\mathrm{diam}(\Gamma(n,k))$.
  \end{proof}
  Note that for $q\geq{n\choose t}$, the assumptions of
  Corollary~\ref{diam} hold.

Theorem~\ref{c1} follows from Lemma~\ref{mds}, Theorem~\ref{m3} and  Corollary~\ref{diam}.

\section*{Acknowledgments}
The first and second authors are affiliated with GNSAGA of INdAM (Italy) whose support they
acknowledge. The third author was supported by the National Science Centre, Poland
(grant number 2019/03/X/ST/00944).



 \end{document}